\numberwithin{equation}{section}
\theoremstyle{plain}	     
\newtheorem{thm}{Theorem}[section] 
\newtheorem{cor}[thm]{Corollary}
\newtheorem{lem}[thm]{Lemma}
\newtheorem{prop}[thm]{Proposition}
\theoremstyle{definition}
\theoremstyle{remark} 
\newtheorem{rem}[thm]{Remark}
\newcommand{\disp}{\displaystyle}
\begin{document}
\title{A new form of the \\ generalized complete elliptic integrals
\footnote{This work was supported by MEXT/JSPS KAKENHI Grant (No. 24540218).}}
\author{Shingo Takeuchi \\
Department of Mathematical Sciences\\
Shibaura Institute of Technology
\thanks{307 Fukasaku, Minuma-ku,
Saitama-shi, Saitama 337-8570, Japan. \endgraf
{\it E-mail address\/}: shingo@shibaura-it.ac.jp \endgraf
{\it 2010 Mathematics Subject Classification.} 
33E05, 33C75, 11Z05}}
\date{}

\maketitle

\begin{abstract}
Generalized trigonometric functions are applied to
the Legendre-Jacobi standard form of complete elliptic 
integrals, and a new form of 
the generalized complete elliptic integrals of the Borweins
is presented. According to the form, it can be easily shown that   
these integrals have similar properties to the classical ones.
In particular, it is possible to establish a computation
formula of the generalized $\pi$ in terms of the arithmetic-geometric mean,
in the classical way 
as the Gauss-Legendre algorithm for $\pi$ by Salamin and Brent. 
Moreover, an elementary new proof of Ramanujan's cubic transformation is 
also given.
\end{abstract}

\textbf{Keywords:}
Generalized trigonometric functions;
Generalized complete elliptic integrals; 
Ramanujan's cubic transformation;
Arithmetic-geometric mean;
Gauss-Legendre's algorithm;
$p$-Laplacian


\section{Introduction}

Complete elliptic integrals of the first kind and of the second kind
\begin{align*}
K(k)
&=\int_0^{\frac{\pi}{2}} \frac{d\theta}{\sqrt{1-k^2\sin^2{\theta}}}
=\int_0^1 \frac{dt}{\sqrt{(1-t^2)(1-k^2t^2)}},\\
E(k)
&=\int_0^{\frac{\pi}{2}} \sqrt{1-k^2\sin^2{\theta}}\,d\theta
=\int_0^1 \sqrt{\frac{1-k^2t^2}{1-t^2}}\,d\theta
\end{align*}
are classical integrals which have helped us, for instance, 
to evaluate the length of curves and
to express exact solutions
of differential equations.

In this paper we give a generalization of complete elliptic integrals
as an application of generalized trigonometric functions.
For this, we need the generalized sine function $\sin_p{\theta}$ 
and the generalized $\pi$ denoted by $\pi_p$,
where $\sin_p{\theta}$ is the inverse function of 
$$\sin_p^{-1}{\theta}:=\int_0^\theta \frac{dt}{(1-t^p)^{\frac1p}},\quad 
0 \leq \theta \leq 1,$$
and $\pi_p$ is the number defined by
$$\pi_p:=2\sin_p^{-1}{1}=2\int_0^1\frac{dt}{(1-t^p)^{\frac1p}}
=\frac{2\pi}{p\sin{\frac{\pi}{p}}}.$$
Clearly, $\sin_2{\theta}=\sin{\theta}$ and $\pi_2=\pi$.
These two appear in the eigenvalue problem of 
one-dimensional $p$-Laplacian:
$$-(|u'|^{p-2}u')'=\lambda |u|^{p-2}u,\quad u(0)=u(1)=0.$$
Indeed, the eigenvalues are given as $\lambda_n=(p-1)(n\pi_p)^p,\ n=1,2,3,\ldots$,
and the corresponding eigenfunction to $\lambda_n$ is 
$u_n(x)=\sin_p{(n \pi_p x)}$ for each $n$.
There are a lot of literature on generalized trigonometric functions
and related functions. See \cite{Bu,BE,DoR,EGL,E,LE,Li,LP,LP2,Sh}
for general properties as functions;
\cite{DEM,DoR,DM,LE,N,T} for applications to differential equations
involving $p$-Laplacian;
\cite{BBCDG,BE,BL,EGL,EGL2,LE,T2} for basis properties for sequences 
of these functions. 

Now, applying $\sin_p{\theta}$ and $\pi_p$ to the complete elliptic functions, 
we define the
\textit{complete $p$-elliptic integrals of the first kind $K_p(k)$} and 
\textit{of the second kind $E_p(k)$}: for $p \in (1,\infty)$ and $k \in [0,1)$
\begin{align}
\label{eq:CpEIK}
K_p(k)
&:=\int_0^{\frac{\pi_p}{2}} \frac{d\theta}{(1-k^p\sin_p^p{\theta})^{1-\frac1p}}
=\int_0^1 \frac{dt}{(1-t^p)^{\frac1p}(1-k^pt^p)^{1-\frac1p}},\\
\label{eq:CpEIE}
E_p(k)
&:=\int_0^{\frac{\pi_p}{2}} (1-k^p\sin_p^p{\theta})^{\frac1p}\,d\theta
=\int_0^1 \left(\frac{1-k^pt^p}{1-t^p}\right)^{\frac1p}\,dt.
\end{align}
Here, each second equality of the definitions is obtained by setting 
$\sin_p{\theta}=t$. 
It is easy to see that for $p=2$ these integrals are equivalent to the classical
complete elliptic integrals $K(k)$ and $E(k)$.

It is worth pointing out that  
the Borweins \cite[Section 5.5]{BB3} define 
the \textit{generalized complete elliptic 
integrals of the first} and \textit{of the second kind} by
\begin{align}
\label{eq:Ks}
\mathrm{K}_s(k)
&:=\frac{\pi}{2}F\left(\frac12-s,\frac12+s;1;k^2\right),\\
\label{eq:Es}
\mathrm{E}_s(k)
&:=\frac{\pi}{2}F\left(-\frac12-s,\frac12+s;1;k^2\right)
\end{align}
for $|s|<1/2$ and $0 \leq k <1$,
where $F(a,b;c;x)$ denotes the Gaussian hypergeometric function
(see Section 2 for the definition). Note that 
$\mathrm{K}_0(k)=K(k)$ and $\mathrm{E}_0(k)=E(k)$.
According to Euler's integral representation (see  
\cite[Theorem 2.2.1]{AAR} or \cite[p.\,293]{WW}), we have
\begin{align*}
\mathrm{K}_s(k)
&=\frac{\cos{\pi s}}{2s+1}\int_0^1
\frac{dt}{(1-t^{\frac{2}{2s+1}})^{\frac{2s+1}{2}}
(1-k^2t^{\frac{2}{2s+1}})^{1-\frac{2s+1}{2}}},\\
\mathrm{E}_s(k)
&=\frac{\cos{\pi s}}{2s+1}\int_0^1
\left(\frac{1-k^2t^{\frac{2}{2s+1}}}{1-t^{\frac{2}{2s+1}}}\right)^{\frac{2s+1}{2}}\,dt.
\end{align*}
Thus 
$$\mathrm{K}_s(k)=\frac{\pi}{\pi_p}K_p(k^\frac{2}{p}),\quad
\mathrm{E}_s(k)=\frac{\pi}{\pi_p}E_p(k^\frac{2}{p}),
$$
where $p=2/(2s+1)$.
To show this, one may use Proposition \ref{prop:hypergeometricexpression} below
instead of integral representations. 
Anyway, we emphasize that the complete $p$-elliptic integrals 
\eqref{eq:CpEIK} and \eqref{eq:CpEIE} give representations 
of generalized complete elliptic integrals in the Legendre-Jacobi standard form
with generalized trigonometric functions.
The advantage of using the complete $p$-elliptic integrals lies in the fact that 
it is possible to prove formulas of the generalized complete elliptic 
integrals simply as well as that of the classical complete elliptic integrals.
For example, we have known the following Legendre relation between $K(k)$
and $E(k)$ (see \cite{AAR,BB3,EMOT,WW}).
\begin{equation}
\label{eq:legendrep=2}
K'(k)E(k)+K(k)E'(k)-K(k)K'(k)=\frac{\pi}{2},
\end{equation}
where $k':=\sqrt{1-k^2},\ K'(k):=K(k')$ and $E'(k):=E(k')$.
For this we can show the following relation between $K_p(k)$ and $E_p(k)$.
\begin{thm}
\label{thm:p-legendre}
For $k \in (0,1)$
\begin{equation}
\label{eq:p-legendre}
K_p'(k)E_p(k)+K_p(k)E_p'(k)-K_p(k)K_p'(k)=\frac{\pi_p}{2},
\end{equation}
where $k':=(1-k^p)^{\frac1p},\ K_p'(k):=K_p(k')$ and $E_p'(k):=E_p(k')$.
\end{thm}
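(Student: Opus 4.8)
The plan is to follow the classical route to the Legendre relation \eqref{eq:legendrep=2}: writing $L_p(k)$ for the left-hand side of \eqref{eq:p-legendre}, I would first show that $L_p$ is constant on $(0,1)$ by verifying $dL_p/dk\equiv 0$, and then identify the constant by letting $k\to 0^+$. Throughout, the prime on $K_p'$ and $E_p'$ denotes the complementary modulus $k'=(1-k^p)^{1/p}$, not differentiation, so I will write $d/dk$ for derivatives; note that $k'$ is an involution, $(k')'=k$, with $dk'/dk=-k^{p-1}/(k')^{p-1}$.

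The crux is a pair of differentiation formulas,
\begin{equation*}
\frac{dE_p}{dk}=\frac{E_p-K_p}{k},\qquad
\frac{dK_p}{dk}=\frac{E_p-(k')^pK_p}{k\,(k')^p}.
\end{equation*}
The first is immediate: differentiating \eqref{eq:CpEIE} under the integral sign and using the identity $(1-k^pt^p)^{1/p}-(1-k^pt^p)^{1/p-1}=-k^pt^p(1-k^pt^p)^{1/p-1}$ recovers $k^{-1}(E_p-K_p)$. The formula for $dK_p/dk$ is the main obstacle and is the one place a genuine computation is needed. I would obtain it by integrating the identity
\begin{equation*}
\frac{d}{dt}\Bigl(t\,(1-t^p)^{1-\frac1p}(1-k^pt^p)^{\frac1p-1}\Bigr)
\end{equation*}
over $[0,1]$; the boundary terms vanish because $1-1/p>0$ and because of the factor $t$, leaving a linear relation among three integrals. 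Reducing each of these to $K_p$, $E_p$, and the quantity $\int_0^1 t^p(1-t^p)^{-1/p}(1-k^pt^p)^{1/p-2}\,dt$ that occurs in $dK_p/dk$ --- by means of the algebraic rewritings $k^pt^p=1-(1-k^pt^p)$ and $(1-t^p)^{1-1/p}=(1-t^p)(1-t^p)^{-1/p}$ --- turns the relation into a linear equation for $dK_p/dk$, which solves to give the stated formula. For $p=2$ both formulas reduce to the familiar $dE/dk=(E-K)/k$ and $dK/dk=(E-(1-k^2)K)/(k(1-k^2))$.

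With these in hand the rest is routine. By the chain rule $\frac{d}{dk}K_p(k')=\frac{dK_p}{dk}\big|_{k'}\cdot\frac{dk'}{dk}$, and likewise for $E_p(k')$; since the complementary modulus of $k'$ is $k$, substituting the two differentiation formulas gives
\begin{equation*}
\frac{d}{dk}K_p'=-\frac{E_p'-k^pK_p'}{k\,(k')^p},\qquad
\frac{d}{dk}E_p'=-\frac{(E_p'-K_p')\,k^{p-1}}{(k')^p}.
\end{equation*}
Differentiating $L_p=K_p'E_p+K_pE_p'-K_pK_p'$ by the product rule and inserting all four derivatives, I expect every monomial in $K_p,E_p,K_p',E_p'$ to cancel once the relation $k^p+(k')^p=1$ is used, giving $dL_p/dk=0$. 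Finally I would evaluate the constant as $k\to0^+$: there $K_p(0)=E_p(0)=\pi_p/2$ and $E_p'(0)=E_p(1)=1$, while $K_p'=K_p(k')\to\infty$ only logarithmically and is multiplied by $E_p-K_p=O(k^p)$, so $L_p=K_p'(E_p-K_p)+K_pE_p'\to\tfrac{\pi_p}{2}$. Hence $L_p\equiv\pi_p/2$, which is exactly \eqref{eq:p-legendre}.
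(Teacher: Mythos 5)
Your proposal is correct and follows essentially the paper's own route: your two differentiation formulas are exactly Proposition \ref{prop:p-differential}, constancy of the left-hand side is obtained by the same product-rule computation (and the cancellation you anticipate does occur, as the paper verifies explicitly), and the constant is identified in the limit $k \to 0^+$. Even your derivation of $dK_p/dk$, which you single out as the main obstacle, is the paper's computation in disguise: under the substitution $t=\sin_p\theta$ your function $t(1-t^p)^{1-\frac1p}(1-k^pt^p)^{\frac1p-1}$ becomes $\sin_p{\theta}\,\cos_p^{p-1}{\theta}\,(1-k^p\sin_p^p{\theta})^{\frac1p-1}$, and integrating its total derivative to zero over $[0,1]$ is precisely the paper's integration by parts against $\frac{d}{d\theta}\bigl(-\cos_p^{p-1}{\theta}\,(1-k^p\sin_p^p{\theta})^{\frac1p-1}\bigr)$. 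I checked that your three-integral relation does close up, using $k^p\int_0^1 t^p(1-t^p)^{-\frac1p}(1-k^pt^p)^{\frac1p-1}\,dt=K_p-E_p$, and yields the stated formula.

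The one step you assert without justification is that $K_p'(k)=K_p(k')$ diverges ``only logarithmically'' as $k\to 0^+$. That is true, but for general $p$ it is not a quotable standard fact, and your proposal contains no proof of it. Fortunately nothing that precise is needed: since $1-(k')^pt^p=1-t^p+k^pt^p\ge k^p$ on $[0,1]$, one gets the crude bound $K_p'\le k^{1-p}\,\frac{\pi_p}{2}$, and combined with your (correct) estimate $K_p-E_p\le k^pK_p$ this gives $(K_p-E_p)K_p'\le kK_p\,\frac{\pi_p}{2}\to 0$, which is exactly the paper's argument. You should replace the logarithmic-growth claim by this elementary bound (or else supply a proof of the logarithmic behaviour, which would be more work than the theorem requires).
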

In fact, it is known that
$\mathrm{K}_s(k)$ and $\mathrm{E}_s(k)$ also satisfy
the similar relation \eqref{eq:legendre-borwein} below to \eqref{eq:p-legendre}, 
which follows from Elliott's identity \eqref{eq:elliott} below.
In contrast to this, our approach with generalized trigonometric functions
seems to be more elementary and self-contained.

As an application of generalized $p$-elliptic integrals,
we establish a computation formula of $\pi_p$.
Let us first mention the case $p=2$. In that case, consider
the sequences $\{a_n\}$ and $\{b_n\}$ satisfying $a_0=a,\ b_0=b$,
where $a \geq b>0$, and
$$a_{n+1}=\frac{a_n+b_n}{2},\quad b_{n+1}=\sqrt{a_nb_n},\quad n=0,1,2,\ldots.$$
It is easily checked that these sequences converge to a common limit,
the \textit{arithmetic-geometric mean of $a$ and $b$}, denoted by $M(a,b)$.
It is well-known that on May 30th in 1799 
Gauss discovered a celebrated relation between 
$M(a,b)$ and $K(k)$ (precisely, a relation between 
$M(1,\sqrt{2})$ and the lemniscate integral): 
\begin{equation}
\label{eq:gauss}
K(k)=\frac{\pi}{2}\frac{1}{M(1,k')},
\end{equation}
where $k'=\sqrt{1-k^2}$. Combining \eqref{eq:legendrep=2} and \eqref{eq:gauss}
with $k=k'=1/\sqrt{2}$, Salamin \cite{Sa} and Brent \cite{Br} independently
established the following formula (see also \cite{AAR,BB3} for the proof).
\begin{equation}
\label{eq:pi}
\pi=\frac{2M\left(1,\dfrac{1}{\sqrt{2}}\right)^2}
{\disp 1-\sum_{n=0}^\infty 2^n(a_n^2-b_n^2)},
\end{equation}
where the initial data of $\{a_n\}$ and $\{b_n\}$ are $a=1$ and $b=1/\sqrt{2}$.
This is known as a fundamental formula to Gauss-Legendre algorithm,
or Salamin-Brent algorithm, for computing the value of $\pi$.

Owing to \eqref{eq:pi}, it is natural to try to establish 
a computation formula of $\pi_p$.
In addition we are interested in finding such an elementary way of 
its construction as Salamin and Brent.

In the present paper, we give the formula only for the case $p=3$.
We prepare notation for stating results. 
Let $a \geq b>0$, and assume that 
$\{a_n\}$ and $\{b_n\}$ are sequences
satisfying $a_0=a, b_0=b$ and 
\begin{equation}
\label{eq:sequence}
a_{n+1}=\frac{a_n+2b_n}{3}, \quad b_{n+1}=\sqrt[3]{\frac{(a_n^2+a_nb_n+b_n^2)b_n}{3}},\quad n=0,1,2,\ldots.
\end{equation}
It is easy to see that both the sequences converge to 
the same limit as $n \to \infty$, denoted by $M_3(a,b)$.
Then, we obtain 
\begin{thm}
\label{thm:p-agm}
Let $0 \leq k<1$. Then
$$K_3(k)=\frac{\pi_3}{2}\frac{1}{M_3(1,k')},$$
where $k'=\sqrt[3]{1-k^3}$.
\end{thm}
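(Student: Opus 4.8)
The plan is to pass from the integral $K_3$ to a Gaussian hypergeometric function and then to run Gauss's arithmetic--geometric--mean argument in the cubic setting, the role played by the quadratic Landen step in the classical case being taken over by Ramanujan's cubic transformation. By Proposition~\ref{prop:hypergeometricexpression} one has the representation
\[
K_3(k)=\frac{\pi_3}{2}\,F\!\left(\tfrac13,\tfrac23;1;k^3\right),
\]
so, since $k'=\sqrt[3]{1-k^3}$ gives $1-k'^3=k^3$, the asserted identity is equivalent to $1/M_3(1,k')=F(\tfrac13,\tfrac23;1;1-k'^3)$. To prove this I would introduce the function
\[
g(a,b):=\frac1a\,F\!\left(\tfrac13,\tfrac23;1;1-\frac{b^3}{a^3}\right),\qquad a\ge b>0,
\]
which is finite because $b>0$ keeps the argument below $1$, and which is homogeneous of degree $-1$ (matching the degree $1$ of $M_3$). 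The goal is to show that $g$ is invariant under one step of the iteration \eqref{eq:sequence}; evaluating $g$ at the common limit then yields $g(a,b)=1/M_3(a,b)$, and the case $a=1,\ b=k'$ is exactly the displayed identity.

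The heart of the argument is the invariance $g(a,b)=g(a_1,b_1)$, where $a_1=(a+2b)/3$ and $b_1=\sqrt[3]{(a^2+ab+b^2)b/3}$. I would parametrise the ratio by writing $b/a=(1-x)/(1+2x)$ with $x\in[0,1)$, which is a decreasing bijection onto $(0,1]$ and hence covers all admissible $(a,b)$. A direct computation then produces the two key relations
\[
a_1=\frac{a}{1+2x},\qquad \left(\frac{b_1}{a_1}\right)^{3}=1-x^3,
\]
the second following from $(a^2+ab+b^2)b=3a^3(1-x^3)/(1+2x)^3$, which rests on the elementary evaluation $1+r+r^2=3(1+x+x^2)/(1+2x)^2$ at $r=(1-x)/(1+2x)$ together with $(1-x)(1+x+x^2)=1-x^3$. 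Feeding the parametrisation into $g$ and applying Ramanujan's cubic transformation
\[
F\!\left(\tfrac13,\tfrac23;1;1-\Big(\tfrac{1-x}{1+2x}\Big)^{3}\right)=(1+2x)\,F\!\left(\tfrac13,\tfrac23;1;x^3\right)
\]
turns $g(a,b)$ into $\tfrac{1+2x}{a}F(\tfrac13,\tfrac23;1;x^3)=\tfrac{1}{a_1}F(\tfrac13,\tfrac23;1;1-(b_1/a_1)^3)=g(a_1,b_1)$, which is precisely the claimed invariance.

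With invariance in hand, iterating gives $g(a,b)=g(a_n,b_n)$ for every $n$. Since both sequences converge to $M_3(a,b)$, the ratio $b_n/a_n\to1$, so $1-(b_n/a_n)^3\to0$; using $F(\tfrac13,\tfrac23;1;0)=1$ and the continuity of $F$ at $0$ I may pass to the limit to obtain $g(a,b)=1/M_3(a,b)$. Specialising to $a=1,\ b=k'$ and combining with the hypergeometric representation of $K_3$ finishes the proof. I expect the main obstacle to be the invariance step: one must verify that the cubic iteration \eqref{eq:sequence} is exactly conjugate, via $b/a=(1-x)/(1+2x)$, to the parameter change in Ramanujan's cubic transformation, and it is this transformation---rather than any elementary change of variable---that carries the argument, so its prior establishment in the paper is essential. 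The remaining points (bijectivity of the parametrisation and the limiting argument) are routine by comparison.
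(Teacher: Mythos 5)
Your proposal is correct and follows essentially the same route as the paper: the paper proves invariance of $aI(a,b)$ under one step of the cubic iteration (Lemma \ref{lem:I}, via Proposition \ref{prop:KE}(ii), which is Ramanujan's cubic transformation recast for $K_3$) and then passes to the limit (Proposition \ref{prop:KM}), and your invariant $g(a,b)=\frac1a F\bigl(\tfrac13,\tfrac23;1;1-b^3/a^3\bigr)$ is exactly $\frac{2}{\pi_3}\,aI(a,b)$ in hypergeometric notation, with your parametrization $b/a=(1-x)/(1+2x)$ reproducing the paper's substitution $\ell=(a-b)/(a+2b)$. The only cosmetic difference is that you work with $F$ directly via Lemma \ref{lem:ramanujan} rather than with the auxiliary integrals $I_3(a,b)$, and your explicit continuity-of-$F$-at-$0$ justification of the limiting step is, if anything, slightly more careful than the paper's.
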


Actually, Theorem \ref{thm:p-agm} is identical to the result
of the Borweins \cite[Theorem 2.1 (b)]{BB2} (with some trivial typos).
In either proof, it is essential to show Ramanujan's cubic transformation
(Lemma \ref{lem:ramanujan} below) . We will give a new proof
for this by more elementary calculation with properties of $K_3(k)$.

By Theorems \ref{thm:p-legendre} and \ref{thm:p-agm} 
we obtain the following formula of $\pi_3$.
\begin{thm}
\label{thm:main}
Let $a=1$ and $b=1/\sqrt[3]{2}$. 
Then
$$\pi_3=\frac{\displaystyle 2M_3\left(1,\frac{1}{\sqrt[3]{2}}\right)^2}
{\displaystyle 1-2\sum_{n=1}^\infty 3^n (a_n+c_n)c_n},$$
where $\{a_n\}$ and $\{b_n\}$ are the sequences \eqref{eq:sequence}
and $c_{n}:=\sqrt[3]{a_n^3-b_n^3}$.
\end{thm}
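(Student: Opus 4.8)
The plan is to reproduce, for $p=3$, the Salamin--Brent derivation of the classical formula \eqref{eq:pi}: evaluate the generalized Legendre relation at the self-dual modulus, remove $K_3$ by the arithmetic--geometric mean identity of Theorem \ref{thm:p-agm}, and rewrite $E_3/K_3$ through the iteration \eqref{eq:sequence}. First I would locate the fixed point of $k\mapsto k'=\sqrt[3]{1-k^3}$, i.e.\ the root of $k^3=1-k^3$, which is $k=k'=1/\sqrt[3]{2}$. Abbreviating $K_3=K_3(1/\sqrt[3]{2})$ and $E_3=E_3(1/\sqrt[3]{2})$, at this modulus $K_3'=K_3$ and $E_3'=E_3$, so \eqref{eq:p-legendre} in Theorem \ref{thm:p-legendre} collapses to $2K_3E_3-K_3^2=\pi_3/2$, that is,
\[
\pi_3=2K_3^2\left(\frac{2E_3}{K_3}-1\right).
\]

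Next I would substitute Theorem \ref{thm:p-agm}. With $a=1$ and $b=k'=1/\sqrt[3]{2}$ it gives $K_3=\pi_3/(2M_3)$, where $M_3=M_3(1,1/\sqrt[3]{2})$, hence $K_3^2=\pi_3^2/(4M_3^2)$. Inserting this and cancelling one factor $\pi_3$ turns the previous line into $\pi_3=2M_3^2/(2E_3/K_3-1)$. Comparing with the asserted formula, the whole theorem reduces to the single series identity
\[
\frac{E_3}{K_3}=1-\sum_{n=1}^{\infty}3^{n}(a_n+c_n)c_n,\qquad c_n=\sqrt[3]{a_n^3-b_n^3}.
\]

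To prove this I would read \eqref{eq:sequence} as a descending chain of moduli. Putting $k_n=c_n/a_n$ and $k_n'=b_n/a_n$ one has $k_n^3+k_n'^3=1$, and \eqref{eq:sequence} yields the clean relations $c_{n+1}=(a_n-b_n)/3$, $a_n=a_{n+1}+2c_{n+1}$ and $b_n=a_{n+1}-c_{n+1}$. Since $M_3(1,k')=a_nM_3(1,k_n')$ by scale invariance of the iteration, Theorem \ref{thm:p-agm} gives $K_3(k_n)=a_nK_3(k)$, so the step $(a_n,b_n)\mapsto(a_{n+1},b_{n+1})$ is exactly Ramanujan's cubic transformation (Lemma \ref{lem:ramanujan}). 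Writing $R_n=E_3(k_n)/K_3(k_n)$, the chain ends at $k_n\to0$, where $R_\infty=E_3(0)/K_3(0)=1$, so a telescoping of $R_0=R_\infty+\sum_{n\geq0}(R_n-R_{n+1})$ reduces the identity to the per-step recursion $R_n-R_{n-1}=3^{n}(a_n+c_n)c_n$ for $n\geq1$; summing then yields exactly the sum from $n=1$. Substituting back into $\pi_3=2M_3^2/(2E_3/K_3-1)$ completes the proof.

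The decisive step is the per-step recursion for $R_n$, i.e.\ a companion transformation for $E_3$ along the cubic chain. Unlike $K_3$, the second integral does not merely rescale, so I would derive this recursion by differentiating Ramanujan's cubic transformation (Lemma \ref{lem:ramanujan}), or equivalently by combining the derivative formulas of $K_3$ and $E_3$ that already underlie Theorem \ref{thm:p-legendre}, and then identify the resulting increment with $3^{n}(a_n+c_n)c_n$ using the relations above. Extracting this exact increment (in particular explaining why the series starts at $n=1$) and controlling convergence of the series --- which is rapid since $c_n\to0$ and $a_n,b_n\to M_3$ --- is where the real work lies.
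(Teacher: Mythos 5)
Your first two paragraphs are exactly the paper's own argument: evaluate \eqref{eq:p-legendre} at the self-dual modulus $k=k'=1/\sqrt[3]{2}$, insert Theorem \ref{thm:p-agm}, and reduce the theorem to the series identity $E_3/K_3=1-\sum_{n\ge1}3^n(a_n+c_n)c_n$, which is precisely Proposition \ref{prop:EK} of the paper specialized to $a=1$, $b=1/\sqrt[3]{2}$; likewise, your plan to prove it by iterating Ramanujan's transformation together with a companion transformation for $E_3$ obtained by differentiation is the paper's route (Proposition \ref{prop:KE} (iv), Lemma \ref{lem:IJ}). The genuine gap is in the telescoping you propose: the per-step recursion $R_n-R_{n-1}=3^n(a_n+c_n)c_n$, with $R_n:=E_3(k_n)/K_3(k_n)$, $k_n=c_n/a_n$, is \emph{false}. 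What the companion transformation actually yields (rewrite Lemma \ref{lem:IJ} using the constancy $K_3(k_n)/a_n=K_3(k_0)/a_0$, which is your scale-invariance observation) is the weighted relation
\[
3a_{n+1}^2\bigl(R_{n+1}-1\bigr)-a_n^2\bigl(R_n-1\bigr)=3(a_{n+1}+c_{n+1})c_{n+1},
\]
so the quantity that telescopes against the series is $S_n:=3^na_n^2(R_n-1)$, not $R_n$ itself: summing gives $S_m-S_0=\sum_{n=1}^m 3^n(a_n+c_n)c_n$, and the identity follows from $S_0=R_0-1$ (since $a_0=1$) together with $S_m\to0$.

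To see that your unweighted version cannot hold, compare rates. By Proposition \ref{prop:hypergeometricexpression}, $1-R_n=\tfrac13k_n^3+O(k_n^6)$, and $k_{n+1}$ is cubically smaller than $k_n$, so $R_n-R_{n-1}\approx\tfrac13 k_{n-1}^3=c_{n-1}^3/(3a_{n-1}^3)$; on the other hand $c_n=(a_{n-1}-b_{n-1})/3\approx c_{n-1}^3/(9a_{n-1}^2)$, so $3^n(a_n+c_n)c_n\approx 3^{n-1}a_{n-1}^2\cdot c_{n-1}^3/(3a_{n-1}^3)$. The two sides differ by the factor $3^{n-1}a_{n-1}^2$, which is $\approx1$ at $n=1$ when $a_0=1$ (this is why the claim looks plausible) but grows without bound. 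Concretely, for $a=1$, $b=2^{-1/3}$ one has $k_1\approx 0.0797$, hence $R_2-R_1<1-R_1\approx\tfrac13k_1^3\approx1.7\times10^{-4}$, whereas $9(a_2+c_2)c_2\approx3.8\times10^{-4}$: your recursion already fails at $n=2$ by more than a factor of $2$. Relatedly, your tail control must be sharpened: with the correct weights one needs $3^ma_m^2(1-R_m)\to0$, which does not follow from $R_m\to1$ alone; it requires the rate $1-R_m=O(k_m^3)$ combined with $3^mc_m^3\le(a-b)^39^{-m}$ from \eqref{eq:cubic} (the paper gets exactly this from the integral estimate in the proof of Proposition \ref{prop:EK}). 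Once the telescoping is done with the weights $3^na_n^2$ and this tail estimate, your argument closes and coincides with the paper's proof.
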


By the theory of theta functions, 
the Borweins \cite[Section 3]{BB2} give three iterations for $\pi$.
One of them is obtained from the same sequences as \eqref{eq:sequence}:
\begin{equation}
\label{eq:BB}
\pi=\dfrac{3 M_3\left(1,\left(\dfrac{\sqrt{3}-1}{2}\right)'\right)^2}
{\displaystyle 1-\sum_{n=0}^\infty 3^{n+1}(a_n^2-a_{n+1}^2)}.
\end{equation}
However, note that the initial data $a=1$ and $b=((\sqrt{3}-1)/2)'$
of \eqref{eq:BB}
is different from our initial data $a=1$ and $b=1/\sqrt[3]{2}$.

It is a simple matter to obtain other formulas for $\pi_3$ 
if we combine $\pi_3=4\sqrt{3}\pi/9=2.418\cdots$ with \eqref{eq:pi}
or \eqref{eq:BB}. 
The former converges quadratically 
to $\pi_3$ and the latter does cubically. 
On the other hand, our formula in Theorem \ref{thm:main} 
converges cubically to $\pi_3$ (Table 1).
\begin{table}[htbp]
\centering
\begin{tabular}{|c|c|c|} \hline
& 25 digits & Error \\ \hline
$q_1$ & $2.418399152309345558425031$ & $2.9449 \times 10^{-12}$ \\ \hline
$q_2$ & $2.418399152312290467458771$ & $4.0425 \times 10^{-40}$ \\ \hline
$q_3$ & $2.418399152312290467458771$ & $1.0367 \times 10^{-124}$ \\ \hline
$q_4$ & $2.418399152312290467458771$ & $1.8728 \times 10^{-379}$ \\ \hline
\end{tabular}
\caption{Convergence of  $q_m$ to $\pi_3$, where 
$q_m:=\frac{2a_{m+1}^2}
{1-2\sum_{n=1}^m 3^n (a_n+c_n)c_n}$.}
\end{table}
However, we are not interested in such trivial formulas obtained from
those of $\pi$, and it is not our purpose to study the speed of convergence 
and we will not develop this point here.

This paper is organized as follows.
In Section 2 we have compiled some basic facts of 
complete $p$-elliptic integrals. In particular
we show Legendre's relation for $K_p(k)$ and $E_p(k)$ 
(Theorem \ref{thm:p-legendre})
and observe relationship between the complete $p$-elliptic integrals
and the Gaussian hypergeometric functions (Theorem \ref{thm:p-agm}).
Section 3 establishes a computation formula of $\pi_p$
with $p=3$
as an application of complete $p$-elliptic integrals  (Theorem \ref{thm:main}). 
In particular, we give an elementary proof of Ramanujan's
cubic transformation by using our representation of integrals 
(Lemma \ref{lem:ramanujan}).
 

\section{Complete $p$-Elliptic Integrals}


In this section, we present some basic properties of
complete $p$-elliptic integrals $K_p(k)$ and $E_p(k)$.

Let $1<p<\infty$. We repeat the definition of 
complete $p$-elliptic integrals of the first kind $K_p(k)$ and 
of the second kind $E_p(k)$: for $k \in [0,1)$, 
\begin{align}
K_p(k)
&:=\int_0^{\frac{\pi_p}{2}} \frac{d\theta}{(1-k^p\sin_p^p{\theta})^{1-\frac1p}}
=\int_0^1 \frac{dt}{(1-t^p)^{\frac1p}(1-k^pt^p)^{1-\frac1p}}, \tag{\ref{eq:CpEIK}} \\
E_p(k)
&:=\int_0^{\frac{\pi_p}{2}} (1-k^p\sin_p^p{\theta})^{\frac1p}\,d\theta
=\int_0^1 \left(\frac{1-k^pt^p}{1-t^p}\right)^{\frac1p}\,dt, \tag{\ref{eq:CpEIE}}
\end{align}
where $\sin_p{\theta}$ and $\pi_p$ have been defined in the Introduction.
As is traditional, we will use the notation 
$k':=(1-k^p)^{\frac1p}$.
The variable $k$ is often called the \textit{modulus}, 
and $k'$ is the \textit{complementary modulus}.
The \textit{complementary integrals} $K_p'(k)$ and $E_p'(k)$
are defined by $K_p'(k):=K_p(k')$ and $E_p'(k):=E_p(k')$.

Let $\cos_p{\theta}:=(1-\sin_p^p{\theta})^{\frac1p}$.
The following formulas will be frequently used: 
$$\sin_p^p{\theta}+\cos_p^p{\theta}=1,$$
$$\frac{d}{d\theta}(\sin_p{\theta})=\cos_p{\theta},\quad
\frac{d}{d\theta}(\cos_p^{p-1}{\theta})=-(p-1)\sin_p^{p-1}{\theta}.$$
If $p=2$ then $\sin_p{\theta},\ \cos_p{\theta}$ and $\pi_p$ 
coincide with the usual $\sin{\theta},\ \cos{\theta}$ and $\pi$, respectively,
so that these properties above are familiar.


The functions $K_p(k)$ and $E_p(k)$ satisfy a system of differential equations.
\begin{prop}
\label{prop:p-differential}
$$\frac{dE_p}{dk}=\frac{E_p-K_p}{k},\quad
\frac{dK_p}{dk}=\dfrac{E_p-(k')^pK_p}{k(k')^p}.$$
\end{prop}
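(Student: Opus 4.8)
The plan is to prove both identities by differentiating the integral representations \eqref{eq:CpEIK} and \eqref{eq:CpEIE} under the integral sign. This is legitimate for $k\in(0,1)$ since the integrands and their $k$-derivatives are continuous in $(t,\theta)$ and dominated by an integrable function near the only singularity $t=1$ (resp.\ $\theta=\pi_p/2$), where the factor $(1-t^p)^{-1/p}$ is integrable.

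For the first identity I would work from the $t$-form of $E_p$. Differentiating $(1-k^pt^p)^{1/p}$ in $k$ produces $-k^{p-1}t^p(1-k^pt^p)^{1/p-1}$, so
\[
\frac{dE_p}{dk}=-k^{p-1}\int_0^1\frac{t^p\,dt}{(1-t^p)^{1/p}(1-k^pt^p)^{1-1/p}}.
\]
Independently, writing $K_p$ with integrand $(1-k^pt^p)^{1/p-1}(1-t^p)^{-1/p}$ and forming $E_p-K_p$, the numerator $(1-k^pt^p)^{1/p}-(1-k^pt^p)^{1/p-1}$ factors as $-k^pt^p(1-k^pt^p)^{1/p-1}$. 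Hence $E_p-K_p=-k^p\int_0^1 t^p(1-t^p)^{-1/p}(1-k^pt^p)^{1/p-1}\,dt$, which is exactly $k$ times the displayed expression for $dE_p/dk$. This gives $dE_p/dk=(E_p-K_p)/k$ with no further work.

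The second identity is the substantive one. Differentiating the $\theta$-form of $K_p$ gives $dK_p/dk=(p-1)k^{p-1}C$, where $C:=\int_0^{\pi_p/2}\sin_p^p{\theta}\,(1-k^p\sin_p^p{\theta})^{1/p-2}\,d\theta$ carries the awkward exponent $1/p-2$. To tame it I would integrate an exact derivative over the full quarter-period: consider
\[
\frac{d}{d\theta}\Bigl[\sin_p{\theta}\,\cos_p^{p-1}{\theta}\,(1-k^p\sin_p^p{\theta})^{1/p-1}\Bigr],
\]
expanded by the product rule using $\tfrac{d}{d\theta}\sin_p{\theta}=\cos_p{\theta}$, $\tfrac{d}{d\theta}\cos_p^{p-1}{\theta}=-(p-1)\sin_p^{p-1}{\theta}$, and $\sin_p^p{\theta}+\cos_p^p{\theta}=1$. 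Integrating over $[0,\pi_p/2]$ kills the boundary term, since the bracket vanishes at $\theta=0$ (where $\sin_p{\theta}=0$) and at $\theta=\pi_p/2$ (where $\cos_p{\theta}=0$). What remains is a linear relation among $C$, the integral $\int(1-k^p\sin_p^p{\theta})^{1/p-1}d\theta=K_p$, the integral $\int(1-k^p\sin_p^p{\theta})^{1/p}d\theta=E_p$, and a second-order integral $\int(1-k^p\sin_p^p{\theta})^{1/p-2}d\theta$. Substituting $\sin_p^p{\theta}=(1-w)/k^p$ throughout, with $w:=1-k^p\sin_p^p{\theta}$, re-expresses every integral in terms of $K_p$, $E_p$ and $C$, the unwanted second-order integral cancelling. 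Solving for $C$ yields $(p-1)(1-k^p)k^p C=E_p-(k')^pK_p$, whence $dK_p/dk=(p-1)k^{p-1}C=(E_p-(k')^pK_p)/(k(k')^p)$.

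The main obstacle is the bookkeeping in the second part: differentiation raises the exponent to $1/p-2$, and the integration-by-parts identity regenerates exactly that same second-order integral, so the computation only closes after the substitution $\sin_p^p{\theta}=(1-w)/k^p$ forces the unwanted term to cancel. I would double-check the algebra of collecting the $K_p$- and $E_p$-coefficients, since a sign slip there is the likeliest source of error; everything else—differentiation under the integral and vanishing of the boundary term—is routine given the elementary formulas for $\sin_p$ and $\cos_p$ recorded above.
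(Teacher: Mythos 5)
Your proposal is correct and takes essentially the same route as the paper: both identities are obtained by differentiating under the integral sign, your factoring of $E_p-K_p$ is just the paper's splitting of the differentiated integrand rearranged, and integrating the exact derivative of $\sin_p{\theta}\,\cos_p^{p-1}{\theta}\,(1-k^p\sin_p^p{\theta})^{\frac1p-1}$ over $[0,\pi_p/2]$ is precisely the paper's integration by parts (your product is the paper's boundary term), with the relation $(p-1)k^p(k')^pC=E_p-(k')^pK_p$ closing exactly as you claim. There is no gap; the differences are purely organizational.
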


\begin{proof}
Differentiating $E_p(k)$ we have
\begin{align*}
\frac{dE_p}{dk}
&=\int_0^{\frac{\pi_p}{2}}
\frac{d}{dk}(1-k^p\sin_p^p{\theta})^{\frac1p}\,d\theta\\
&=\int_0^{\frac{\pi_p}{2}}
\dfrac{-k^{p-1}\sin_p^p{\theta}}{(1-k^p\sin_p^p{\theta})
^{1-\frac1p}}\,d\theta\\
&=\frac{1}{k} \int_0^{\frac{\pi_p}{2}}
\dfrac{1-k^p\sin_p^p{\theta}}{(1-k^p\sin_p^p{\theta})^{1-\frac1p}}
\,d\theta
-\frac{1}{k} \int_0^{\frac{\pi_p}{2}}
\dfrac{d\theta}{(1-k^p\sin_p^p{\theta})^{1-\frac1p}}\\
&=\frac{1}{k} (E_p-K_p).
\end{align*}

Next, for $K_p(k)$ 
\begin{align}
\label{eq:dK}
\frac{dK_p}{dk}
=\int_0^{\frac{\pi_p}{2}}
\dfrac{(p-1)k^{p-1}\sin_p^p{\theta}}
{(1-k^p\sin_p^p{\theta})^{2-\frac1p}}\,d\theta.
\end{align}
Here we see that 
\begin{align*}
\frac{d}{d\theta} 
&\left(\frac{-\cos_p^{p-1}{\theta}}{(1-k^p\sin_p^p{\theta})^{1-\frac1p}}\right)\\
&=\frac{(p-1)\sin_p^{p-1}{\theta}(1-k^p\sin_p^p{\theta})
-(p-1)k^p\sin_p^{p-1}{\theta}\cos_p^p{\theta}}
{(1-k^p\sin_p^p{\theta})^{2-\frac1p}}\\
&=\frac{(p-1)(k')^p\sin_p^{p-1}{\theta}}
{(1-k^p\sin_p^p{\theta})^{2-\frac1p}},
\end{align*}
so that we use integration by parts as
\begin{align*}
\frac{dK_p}{dk}
&=\int_0^{\frac{\pi_p}{2}}
\frac{k^{p-1}}{(k')^p}\frac{d}{d\theta} 
\left(\frac{-\cos_p^{p-1}{\theta}}
{(1-k^p\sin_p^p{\theta})^{1-\frac1p}}\right)
\sin_p{\theta}\,d\theta\\
&=\frac{k^{p-1}}{(k')^p} \left[\frac{-\cos_p^{p-1}{\theta}
\sin_p{\theta}}
{(1-k^p\sin_p^p{\theta})^{1-\frac1p}}
\right]_0^{\frac{\pi_p}{2}}
+\frac{k^{p-1}}{(k')^p} \int_0^{\frac{\pi_p}{2}} 
\frac{\cos_p^p{\theta}}{(1-k^p\sin_p^p{\theta})^{1-\frac1p}}
\,d\theta\\
&=\frac{k^{p-1}}{(k')^p} \int_0^{\frac{\pi_p}{2}}
\frac{1}{k^p} \cdot
\frac{1-k^p\sin_p^p{\theta}-(1-k^p)}{(1-k^p\sin_p^p{\theta})^{1-\frac1p}}\,d\theta\\
&=\frac{1}{k(k')^p}(E_p-(k')^pK_p).
\end{align*}
This completes the proof.
\end{proof}


Proposition \ref{prop:p-differential} now yields 
Theorem \ref{thm:p-legendre}.

\begin{proof}[Proof of Theorem \ref{thm:p-legendre}]
We will differentiate the left-hand side of \eqref{eq:p-legendre}
and apply Proposition \ref{prop:p-differential}.
As $dk'/dk=-(k/k')^{p-1}$
we have
\begin{equation}
\frac{dK_p'}{dk}=\frac{k^pK_p'-E_p'}{k(k')^p},
\quad
\frac{dE_p'}{dk}=k^{p-1}\frac{K_p'-E_p'}{(k')^p}.
\label{eq:p-differential'}
\end{equation}
Hence a direct computation shows that
\begin{align*}
\frac{d}{dk}(K_p'E_p & +K_pE_p'-K_pK_p')\\
&=
\frac{k^pK_p'-E_p'}{k(k')^p}\cdot E_p+K_p'\cdot \frac{E_p-K_p}{k}
+\frac{E_p-(k')^pK_p}{k(k')^p}\cdot E_p'\\
& \qquad +K_p\cdot k^{p-1}\frac{K_p'-E_p'}{(k')^p}-\frac{E_p-(k')^pK_p}{k(k')^p}\cdot K_p'-K_p\cdot \frac{k^pK_p'-E_p'}{k(k')^p}\\
&=E_p'E_p
\left(-\frac{1}{k(k')^p}+\frac{1}{k(k')^p}\right)
+(K_p'E_p-K_pE_p')
\left(\frac{k^{p-1}}{(k')^p}+\frac{1}{k}-\frac{1}{k(k')^p}\right)\\
& \qquad +K_p'K_p 
\left(-\frac{1}{k}+\frac{k^{p-1}}{(k')^p}+\frac{1}{k}-\frac{k^{p-1}}{(k')^p}\right)\\
&=0.
\end{align*}
Therefore the left-hand side of \eqref{eq:p-legendre} is a constant $C$. 

We will evaluate $C$ as follows. 
It is easy to see that $\lim_{k \to +0}K_pE_p'=\pi_p/2$. 
Moreover, since
\begin{align*}
|(K_p & -E_p)K_p'| \\
&=\int_0^{\frac{\pi_p}{2}} 
\left(\frac{1}{(1-k^p\sin_p^p{\theta})^{1-\frac1p}}
-(1-k^p\sin_p^p{\theta})^{\frac1p}\right)\,d\theta \cdot
\int_0^{\frac{\pi_p}{2}} 
\frac{d\theta}{(1-(k')^p\sin_p^p{\theta})^{1-\frac1p}}\\
&=\int_0^{\frac{\pi_p}{2}} 
\frac{k^p\sin_p^p{\theta}}{(1-k^p\sin_p^p{\theta})^{1-\frac1p}}
\,d\theta \cdot
\int_0^{\frac{\pi_p}{2}} 
\frac{d\theta}{(\cos_p^p{\theta}+k^p\sin_p^p{\theta})^{1-\frac1p}}\\
& \leq 
kK_p(k)\frac{\pi_p}{2},
\end{align*}
we obtain $\lim_{k \to +0}(K_p-E_p)K_p'=0$. Thus, 
letting $k \to +0$ in the left-hand side of \eqref{eq:p-legendre},
we conclude that $C=\pi_p/2$.
\end{proof}


\begin{prop}
\label{prop:p-hypergeometric}
$K_p(k)$ and $K_p'(k)$ satisfy
$$\frac{d}{dk}\left(k(k')^p \frac{dy}{dk}\right)
=(p-1)k^{p-1}y,$$
that is
$$k(1-k^p)\frac{d^2y}{dk^2}+(1-(p+1)k^p)\frac{dy}{dk}
-(p-1)k^{p-1}y=0.$$
Moreover $E_p(k)$ and $E_p'(k)-K_p'(k)$ satisfy
$$(k')^p\frac{d}{dk}\left(k\frac{dy}{dk}\right)=-k^{p-1}y,$$
that is 
$$k(1-k^p)\frac{d^2y}{dk^2}+(1-k^p)\frac{dy}{dk}
+k^{p-1}y=0.$$
\end{prop}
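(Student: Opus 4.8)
The plan is to derive all four second-order equations directly from the first-order system of Proposition \ref{prop:p-differential}, together with its complementary counterpart \eqref{eq:p-differential'} already obtained in the proof of Theorem \ref{thm:p-legendre}. The two stated forms of each equation are equivalent by the product rule, using $(k')^p=1-k^p$ and hence $\frac{d}{dk}(k')^p=-pk^{p-1}$; so it suffices to establish the divergence form $\frac{d}{dk}(k(k')^p\,dy/dk)=(p-1)k^{p-1}y$ for $K_p$ and $K_p'$, and $(k')^p\frac{d}{dk}(k\,dy/dk)=-k^{p-1}y$ for $E_p$ and $E_p'-K_p'$. The guiding algebraic identities throughout will be $(k')^p-1=-k^p$ and $k^p-1=-(k')^p$, which produce the factor $(p-1)$ or the sign in each final step.

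For $K_p$ I would start from Proposition \ref{prop:p-differential} rewritten as $k(k')^p\,dK_p/dk=E_p-(k')^pK_p$, differentiate once, and substitute $dE_p/dk=(E_p-K_p)/k$ together with $(k')^p\,dK_p/dk=(E_p-(k')^pK_p)/k$. The terms in $E_p$ cancel, and the surviving $K_p$-terms collapse via $(k')^p-1=-k^p$ to exactly $(p-1)k^{p-1}K_p$. For $K_p'$ the same computation runs verbatim with \eqref{eq:p-differential'} in place of Proposition \ref{prop:p-differential}: from $k(k')^p\,dK_p'/dk=k^pK_p'-E_p'$ one differentiates and eliminates $E_p'$ using $dE_p'/dk=k^{p-1}(K_p'-E_p')/(k')^p$, and the identity $k^p-1=-(k')^p$ again delivers $(p-1)k^{p-1}K_p'$.

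For the second equation, $E_p$ is the quickest: from $k\,dE_p/dk=E_p-K_p$ one differentiates to get $\frac{d}{dk}(k\,dE_p/dk)=dE_p/dk-dK_p/dk$, and putting both derivatives over the common denominator $k(k')^p$ leaves only $E_p((k')^p-1)/(k(k')^p)=-k^{p-1}E_p/(k')^p$, which is the claim. For $w:=E_p'-K_p'$ the crucial preliminary step is to show $k\,dw/dk=E_p'$: subtracting the two relations in \eqref{eq:p-differential'} and simplifying with $1-k^p=(k')^p$ makes the $K_p'$-terms cancel and isolates $E_p'$. Differentiating once more then gives $\frac{d}{dk}(k\,dw/dk)=dE_p'/dk=-k^{p-1}(E_p'-K_p')/(k')^p=-k^{p-1}w/(k')^p$, as required.

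None of the steps presents a genuine obstacle; the only point demanding care is the bookkeeping for $w=E_p'-K_p'$, where recognizing the clean relation $k\,dw/dk=E_p'$ (rather than carrying a two-function expression through a second differentiation) is what keeps the computation short. A secondary point is purely organizational: one must remember to invoke the complementary relations \eqref{eq:p-differential'} for $K_p'$ and for $E_p'-K_p'$, since these are differentiated with respect to $k$ and not with respect to $k'$.
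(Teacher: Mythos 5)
Your proposal is correct and follows essentially the same route as the paper: both derive all four equations by differentiating the first-order relations of Proposition \ref{prop:p-differential} and \eqref{eq:p-differential'} written in the form $k(k')^p\,dK_p/dk=E_p-(k')^pK_p$ (resp.\ $k\,dE_p/dk=E_p-K_p$), and both hinge on the same key simplification $k\,\frac{d}{dk}\bigl(E_p'-K_p'\bigr)=E_p'$ for the last case (the paper's $H_p'$ is your $w$). The only difference is that you spell out the $K_p'$ computation, which the paper dismisses as "similar."
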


\begin{proof}
Let us first give proofs for $K_p$ and $E_p$.
Repeated application of Proposition \ref{prop:p-differential}
and $d(k')^p/dk=-pk^{p-1}$ enables us to see that
\begin{align*}
\frac{d}{dk}\left(k(k')^p \frac{dK_p}{dk} \right)
&=\frac{d}{dk}(E_p-(k')^pK_p)\\
&=\frac1k (E_p-K_p)-\left(-pk^{p-1}K_p+\frac1k (E_p-(k')^pK_p)\right)\\
&=(p-1)k^{p-1}K_p
\end{align*}
and
\begin{align*}
\frac{d}{dk}\left(k\frac{dE_p}{dk}\right)
&=\frac{d}{dk}(E_p-K_p)\\
&=\frac1k (E_p-K_p)-\frac{1}{k(k')^p}(E_p-(k')^p K_p)\\
&=-\frac{k^{p-1}}{(k')^p}E_p.
\end{align*}

Similarly, it follows easily from \eqref{eq:p-differential'} that
$K_p'$ satisfies
\begin{align*}
\frac{d}{dk}\left(k(k')^p \frac{dK_p'}{dk}\right)
&=(p-1)k^{p-1}K_p'.
\end{align*}
Set $H_p(k):=E_p(k)-K_p(k)$. 
Using \eqref{eq:p-differential'} repeatedly,
we have $dH_p'/dk=E_p'/k$
and 
$$\frac{d}{dk}\left(k\frac{dH_p'}{dk}\right)
=\frac{dE_p'}{dk}
=-\frac{k^{p-1}}{(k')^p}H_p'.$$
The proof is complete.
\end{proof}


Define $K_p^*(k)$ and $E_p^*(k)$ as conjugates
for $K_p(k)$ and $E_p(k)$ respectively: for $k \in [0,1)$
\begin{align}
K^*_p(k)
&:=\int_0^{\frac{\pi_p}{2}} \frac{d\theta}{(1-k^p\sin_p^p{\theta})^{\frac1p}}
=\int_0^1 \frac{dt}{(1-t^p)^{\frac1p}(1-k^pt^p)^{\frac1p}},\\
E^*_p(k)
&:=\int_0^{\frac{\pi_p}{2}} (1-k^p\sin_p^p{\theta})^{1-\frac1p}\,d\theta
=\int_0^1 \frac{(1-k^pt^p)^{1-\frac1p}}{(1-t^p)^\frac1p}\,dt.
\end{align}
It is clear that $K^*_2(k)=K_2(k)=K(k)$ and $E^*_2(k)=E_2(k)=E(k)$. 
The integral $K_p^*(k)$ appears in the study \cite{T}
for bifurcation problems of $p$-Laplacian.
In \cite{Wa}, $K_p^*(k)$ with $t^p$ replaced by $t^2$
is applied to the planar $p$-elastic problem.

Here are some elementary relations between the integrals
we have introduced.
To state the relations, it is convenient to use the notation 
$i_p:=e^{i\pi/p}$ and for a nonnegative number $\ell$
\begin{align}
K^*_p(i_p\ell)
&:=\int_0^{\frac{\pi_p}{2}} \frac{d\theta}{(1+\ell^p\sin_p^p{\theta})^{\frac1p}}
=\int_0^1 \frac{dt}{(1-t^p)^{\frac1p}(1+\ell^pt^p)^{\frac1p}},\\
E^*_p(i_p\ell)
&:=\int_0^{\frac{\pi_p}{2}} (1+\ell^p\sin_p^p{\theta})^{1-\frac1p}\,d\theta
=\int_0^1 \frac{(1+\ell^pt^p)^{1-\frac1p}}{(1-t^p)^\frac1p}\,dt.
\end{align}

\begin{prop}
\label{prop:Krelation}
Let $0 \leq k<1$ and $k^p+(k')^p=1$. Then
\begin{align}
\label{eq:1}
K_{p^*}(k^{p-1})
&=(p-1)\frac{1}{k'}K^*_p\left(i_p\frac{k}{k'}\right),\\
\label{eq:2}
K_{p^*}(k^{p-1})&=(p-1)K_p(k),\\
\label{eq:3}
E_{p^*}(k^{p-1})
&=(p-1)(k')^{p-1}E^*_p\left(i_p\frac{k}{k'}\right),\\
\label{eq:4}
E_{p^*}(k^{p-1})
&=E_p(k)+(p-2)(k')^pK_p(k),
\end{align}
where $p^*:=p/(p-1)$.
\end{prop}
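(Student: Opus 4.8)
The plan is to reduce every one of the four identities to a standard Euler-type integral over $[0,1]$ and then connect the pieces by explicit changes of variable, saving the last identity for a differentiation argument. The first move is to strip off the index $p^*$: writing $s=\sin_{p^*}\phi$ and then $u=s^{p^*}$ (equivalently $u=s^{p/(p-1)}$) in the defining integrals and using $1/p^*=(p-1)/p$, $1-1/p^*=1/p$ and $(k^{p-1})^{p^*}=k^p$, I would record the two reduced forms
\[
K_{p^*}(k^{p-1})=\frac{p-1}{p}\int_0^1 u^{-\frac1p}(1-u)^{-\frac{p-1}{p}}(1-k^pu)^{-\frac1p}\,du,
\]
\[
E_{p^*}(k^{p-1})=\frac{p-1}{p}\int_0^1 u^{-\frac1p}(1-u)^{-\frac{p-1}{p}}(1-k^pu)^{\frac{p-1}{p}}\,du.
\]
On the right-hand sides I would likewise expand $K_p(k)$ and $E_p(k)$ as Euler integrals in $v=t^p$, and rewrite the starred integrals using $(k')^p+k^pt^p=(k')^p\bigl(1+(k/k')^pt^p\bigr)$ together with $(k')^p=1-k^p$.

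For \eqref{eq:1} and \eqref{eq:3} I would apply the single complementary substitution $u=1-t^p$ to the two reduced forms. Under it $1-u=t^p$, $1-k^pu=(k')^p+k^pt^p$ and $du=-pt^{p-1}\,dt$; the factor $t^{-(p-1)}$ arising from $(1-u)^{-(p-1)/p}$ cancels the $t^{p-1}$ from $du$, while $\frac{p-1}{p}\cdot p$ collapses to $p-1$. Pulling $(k')^p$ out of $(k')^p+k^pt^p$ then produces exactly $(p-1)\frac1{k'}K^*_p(i_pk/k')$ in the first case and $(p-1)(k')^{p-1}E^*_p(i_pk/k')$ in the second, which are \eqref{eq:1} and \eqref{eq:3}.

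Identity \eqref{eq:2} is the heart of the matter. Here the two Euler integrals to be matched are
\[
I_1=\int_0^1 u^{-\frac1p}(1-u)^{-\frac{p-1}{p}}(1-k^pu)^{-\frac1p}\,du,\qquad
I_2=\int_0^1 v^{-\frac{p-1}{p}}(1-v)^{-\frac1p}(1-k^pv)^{-\frac{p-1}{p}}\,dv,
\]
where $I_2$ is the reduction of $(p-1)K_p(k)$. The two integrands \emph{swap} the exponents $1/p$ and $(p-1)/p$, so no complementary substitution can identify them. Instead I would use the Möbius substitution $v=(1-u)/(1-k^pu)$, for which one computes $1-v=(k')^pu/(1-k^pu)$, $1-k^pv=(k')^p/(1-k^pu)$ and $dv=-(k')^p(1-k^pu)^{-2}\,du$. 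Substituting these into $I_2$ (with reversed limits, so the Jacobian enters positively), the powers of $(1-u)$ and of $u$ come out as $-\frac{p-1}{p}$ and $-\frac1p$, the accumulated power of $(1-k^pu)$ simplifies, using that the two exponents sum to $1$, to $-\frac1p$, and — the point of the whole computation — the powers of $(k')^p$ cancel identically. Hence $I_2=I_1$, which is \eqref{eq:2}.

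Finally, for \eqref{eq:4} I would avoid any further integral manipulation and instead differentiate \eqref{eq:2} in $k$. Applying Proposition \ref{prop:p-differential} with index $p^*$ and modulus $\kappa=k^{p-1}$ (so that $\kappa^{p^*}=k^p$, whence $(\kappa')^{p^*}=(k')^p$, and $d\kappa/dk=(p-1)k^{p-2}$) gives
\[
\frac{d}{dk}K_{p^*}(k^{p-1})=\frac{p-1}{k(k')^p}\Bigl(E_{p^*}(k^{p-1})-(k')^pK_{p^*}(k^{p-1})\Bigr),
\]
while differentiating the right-hand side of \eqref{eq:2} and using Proposition \ref{prop:p-differential} with index $p$ gives $(p-1)\bigl(E_p-(k')^pK_p\bigr)/\bigl(k(k')^p\bigr)$. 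Equating the two, substituting $K_{p^*}(k^{p-1})=(p-1)K_p$ from \eqref{eq:2}, and cancelling the common factor $(p-1)/(k(k')^p)$ yields $E_{p^*}(k^{p-1})=E_p+(p-2)(k')^pK_p$, which is \eqref{eq:4} (the case $k=0$ following by continuity). I expect the main obstacle to be discovering the Möbius substitution for \eqref{eq:2} and verifying the clean cancellation of the $(k')^p$-powers; once \eqref{eq:2} is established, the differentiation step for \eqref{eq:4} is essentially forced by Proposition \ref{prop:p-differential}.
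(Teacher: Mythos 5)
Your proof is correct. For \eqref{eq:1}, \eqref{eq:2} and \eqref{eq:3} it coincides, modulo packaging, with the paper's own argument: your two-step reduction ($u=s^{p^*}$ followed by $u=1-t^p$) is the paper's single substitution $1-t^{p^*}=u^p$, and your M\"obius change of variable $v=(1-u)/(1-k^pu)$ is exactly the paper's substitution $u^p=(1-k^p)t^p/(1-k^pt^p)$ read in Euler variables (the paper's $u^p$ is your $1-u$, the paper's $t^p$ is your $v$), with the same key cancellation of the powers of $(k')^p$. The genuine difference is \eqref{eq:4}. The paper applies that same M\"obius substitution once more, to \eqref{eq:eee}, arriving at $(p-1)(k')^p\int_0^1(1-t^p)^{-\frac1p}(1-k^pt^p)^{-(2-\frac1p)}\,dt$, and then evaluates this last integral by recognizing it, via \eqref{eq:dK}, as $K_p+\frac{k}{p-1}\frac{dK_p}{dk}$ and invoking Proposition \ref{prop:p-differential}. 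You instead differentiate the already-proved identity \eqref{eq:2} and apply Proposition \ref{prop:p-differential} twice, once with index $p^*$ and modulus $\kappa=k^{p-1}$ (your bookkeeping $(\kappa')^{p^*}=(k')^p$ and $d\kappa/dk=(p-1)k^{p-2}$ is right) and once with index $p$; cancelling the common factor and substituting \eqref{eq:2} gives \eqref{eq:4} on $(0,1)$, with $k=0$ by continuity. Both routes are elementary and both ultimately rest on Proposition \ref{prop:p-differential}; yours avoids a second integral manipulation and is arguably slicker, while the paper's stays entirely within the integral calculus and produces \eqref{eq:4} without needing to differentiate a previously established identity. It is also worth noting that your intermediate identity $E_{p^*}(k^{p-1})-(k')^pK_{p^*}(k^{p-1})=E_p(k)-(k')^pK_p(k)$ is precisely the invariance \eqref{eq:4'} of the subsequent corollary (take $\ell=k^p$), so your argument in effect proves that corollary directly and then deduces \eqref{eq:4} from it, reversing the paper's logical order.
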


\begin{proof}
Let us first show \eqref{eq:1} and \eqref{eq:3}. 
Setting $1-t^{p^*}=u^p$ in each integral, we have
\begin{align}
K_{p^*}(k^{p-1})
&=\int_0^1 \frac{dt}{(1-t^{p^*})^{\frac{1}{p^*}}(1-k^pt^{p^*})^{1-\frac{1}{p^*}}}
\notag \\
&=(p-1)\int_0^1 \frac{du}{(1-u^p)^{\frac1p} (1-k^p+k^pu^p)^\frac1p}
\label{eq:kkk} \\
&=(p-1)\frac{1}{k'}K^*_p \left(i_p\frac{k}{k'}\right) \notag
\end{align}
and
\begin{align}
E_{p^*}(k^{p-1})
&=\int_0^1 \left(\frac{1-k^pt^{p^*}}{1-t^{p^*}}\right)^{\frac{1}{p^*}}\,dt
\notag \\
&=(p-1)\int_0^1 \frac{(1-k^p+k^pu^p)^{1-\frac1p}}{(1-u^p)^{\frac1p}}\,du
\label{eq:eee} \\
&=(p-1)(k')^{p-1}E^*_p\left(i_p\frac{k}{k'}\right). \notag
\end{align}

Next we will prove \eqref{eq:2}. Changing the variable as
$$u^p=\frac{(1-k^p)t^p}{1-k^pt^p},$$
in \eqref{eq:kkk} we obtain
\begin{align*}
K_{p^*}(k^{p-1})
&=(p-1)\int_0^1
\frac{\frac{(1-k^p)^\frac1p}{(1-k^pt^p)^{1+\frac1p}}}
{(\frac{1-t^p}{1-k^pt^p})^\frac1p
(\frac{1-k^p}{1-k^pt^p})^\frac1p}\,dt\\
&=(p-1)\int_0^1
\frac{dt}{(1-t^p)^\frac1p (1-k^pt^p)^{1-\frac1p}}\\
&=(p-1)K_p(k).
\end{align*}

To deduce \eqref{eq:4}, we make use of the same 
change of variable above to \eqref{eq:eee}.
\begin{align*}
E_{p^*}(k^{p-1})
&=(p-1)\int_0^1
\frac{(\frac{1-k^p}{1-k^pt^p})^{1-\frac1p}}
{(\frac{1-t^p}{1-k^pt^p})^\frac1p}
\frac{(1-k^p)^\frac1p}{(1-k^pt^p)^{1+\frac1p}}\,dt\\
&=(p-1)(k')^p\int_0^1
\frac{dt}{(1-t^p)^\frac1p (1-k^pt^p)^{2-\frac1p}}.
\end{align*}
In the last integral, after setting $t=\sin_p{\theta}$, 
using \eqref{eq:dK} and Proposition \ref{prop:p-differential}
we have
\begin{align*}
\int_0^1
\frac{dt}{(1-t^p)^\frac1p (1-k^pt^p)^{2-\frac1p}}
&=\int_0^{\frac{\pi_p}{2}} \frac{d\theta}{(1-k^p\sin_p^p{\theta})^{2-\frac1p}}\\
&=\int_0^{\frac{\pi_p}{2}} \frac{d\theta}{(1-k^p\sin_p^p{\theta})^{1-\frac1p}}
+\int_0^{\frac{\pi_p}{2}} \frac{k^p\sin_p^p{\theta}}{(1-k^p\sin_p^p{\theta})^{2-\frac1p}}\,d\theta\\
&=K_p+\frac{k}{p-1}\frac{dK_p}{dk}\\
&=K_p+\frac{1}{(p-1)(k')^p}(E_p-(k')^pK_p).
\end{align*}
Thus
\begin{align*}
E_{p^*}(k^{p-1})
&=(p-1)(k')^p \left(K_p+\frac{1}{(p-1)(k')^p}(E_p-(k')^pK_p)\right)\\
&=E_p+(p-2)(k')^pK_p.
\end{align*}
Therefore we conclude \eqref{eq:4}.
\end{proof}

From Proposition \ref{prop:Krelation} it immediately follows
\begin{cor}
\label{cor:ippanka}
Let $0 \leq k<1$ and $k^p+(k')^p=1$. Then
\begin{align*}
K_p(k)
&=\frac{1}{k'}K_p^*\left(i_p\frac{k}{k'}\right),\\
E_p(k)
&=(k')^{p-1}\left(
(p-1)E_p^*\left(i_p\frac{k}{k'}\right)
-(p-2)K_p^*\left(i_p\frac{k}{k'}\right)
\right).
\end{align*}
\end{cor}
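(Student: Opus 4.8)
The plan is to obtain both identities purely by eliminating the $p^*$-integrals between the two expressions each of $K_{p^*}(k^{p-1})$ and $E_{p^*}(k^{p-1})$ supplied by Proposition~\ref{prop:Krelation}. Since that proposition already carries all of the analytic content (the substitutions $1-t^{p^*}=u^p$ and $u^p=(1-k^p)t^p/(1-k^pt^p)$ together with the integration by parts), the corollary reduces to linear algebra and careful bookkeeping of the powers of $k'$.

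First I would treat $K_p$. Equations \eqref{eq:1} and \eqref{eq:2} express the single quantity $K_{p^*}(k^{p-1})$ in two ways, so
\[
(p-1)K_p(k)=(p-1)\frac{1}{k'}K_p^*\!\left(i_p\frac{k}{k'}\right),
\]
and dividing by $p-1>0$ yields the first claimed identity $K_p(k)=\tfrac1{k'}K_p^*(i_p k/k')$.

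Next I would handle $E_p$. Equating the two expressions \eqref{eq:3} and \eqref{eq:4} for $E_{p^*}(k^{p-1})$ gives
\[
E_p(k)+(p-2)(k')^pK_p(k)=(p-1)(k')^{p-1}E_p^*\!\left(i_p\frac{k}{k'}\right).
\]
I would then insert the $K_p$-identity just obtained, rewritten as $(k')^pK_p(k)=(k')^{p-1}K_p^*(i_p k/k')$, move that term to the right-hand side, and factor out the common $(k')^{p-1}$ to reach
\[
E_p(k)=(k')^{p-1}\!\left((p-1)E_p^*\!\left(i_p\frac{k}{k'}\right)-(p-2)K_p^*\!\left(i_p\frac{k}{k'}\right)\right),
\]
which is exactly the second claimed identity.

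The only step that requires attention — and the closest thing here to an obstacle — is this substitution for $E_p$: one must eliminate $K_p$ by the newly derived relation rather than leaving it in place, and must correctly track that multiplying $K_p(k)=\tfrac1{k'}K_p^*(\cdots)$ by $(k')^p$ produces $(k')^{p-1}K_p^*(\cdots)$, so that after extracting the common factor $(k')^{p-1}$ the $E_p^*$ and $K_p^*$ terms combine with the stated coefficients $p-1$ and $-(p-2)$. No new integral manipulation or convergence argument is needed beyond what Proposition~\ref{prop:Krelation} already provides.
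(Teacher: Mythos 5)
Your proof is correct and is exactly the derivation the paper intends when it states that the corollary follows immediately from Proposition \ref{prop:Krelation}: equate \eqref{eq:1} with \eqref{eq:2} to get the $K_p$ identity, then equate \eqref{eq:3} with \eqref{eq:4} and eliminate $K_p$ using that identity. The bookkeeping of the factor $(k')^{p-1}$ and the division by $p-1>0$ are handled correctly, so nothing is missing.
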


For $p=2$, the identities of Corollary \ref{cor:ippanka} are equivalent to:
$$K(k)=\frac{1}{k'}K\left(i\frac{k}{k'}\right),\quad
E(k)=k'E\left(i\frac{k}{k'}\right),$$
which can be found in \cite[Table 4, p.\,319]{EMOT2}.  

The next corollary means that
$pK_p(\ell^\frac1p)$ and $E_p(\ell^\frac1p)-(1-\ell)K_p(\ell^\frac1p)$ 
have duality properties with 
respect to $p$. 
\begin{cor}
Let $0 \leq \ell <1$. Then 
\begin{align}
\label{eq:2'}
p^*K_{p^*}(\ell^{\frac{1}{p^*}})&=pK_p(\ell^\frac1p),\\
\label{eq:4'}
E_{p^*}(\ell^\frac{1}{p^*})-(1-\ell)K_{p^*}(\ell^\frac{1}{p^*})
&=E_p(\ell^\frac1p)-(1-\ell)K_p(\ell^\frac1p).
\end{align}
\end{cor}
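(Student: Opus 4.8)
The plan is to deduce both identities directly from Proposition~\ref{prop:Krelation} by the single substitution $\ell=k^p$ (equivalently $k=\ell^{1/p}$), with no new integral computation required. The key piece of bookkeeping is to observe that under this substitution $(k')^p=1-k^p=1-\ell$, and that since $p^*=p/(p-1)$ we have $1/p^*=(p-1)/p$, so the argument transforms as $k^{p-1}=\bigl(\ell^{1/p}\bigr)^{p-1}=\ell^{(p-1)/p}=\ell^{1/p^*}$. Moreover the constraint $k^p+(k')^p=1$ demanded by Proposition~\ref{prop:Krelation} is automatic from the definition of $k'$, and $\ell\in[0,1)$ corresponds exactly to $k\in[0,1)$, so the hypotheses match. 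Under this dictionary, \eqref{eq:2} and \eqref{eq:4} translate verbatim into identities relating $K_{p^*}(\ell^{1/p^*})$, $E_{p^*}(\ell^{1/p^*})$, $K_p(\ell^{1/p})$ and $E_p(\ell^{1/p})$.

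For \eqref{eq:2'} I would simply multiply the rewritten form of \eqref{eq:2}, namely $K_{p^*}(\ell^{1/p^*})=(p-1)K_p(\ell^{1/p})$, by $p^*$ and use the identity $p^*(p-1)=\tfrac{p}{p-1}(p-1)=p$. This yields
\[
p^*K_{p^*}(\ell^{1/p^*})=p^*(p-1)K_p(\ell^{1/p})=pK_p(\ell^{1/p}),
\]
which is precisely \eqref{eq:2'}.

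For \eqref{eq:4'} I would start from the rewritten form of \eqref{eq:4}, namely $E_{p^*}(\ell^{1/p^*})=E_p(\ell^{1/p})+(p-2)(1-\ell)K_p(\ell^{1/p})$, substitute it into the left-hand side $E_{p^*}(\ell^{1/p^*})-(1-\ell)K_{p^*}(\ell^{1/p^*})$ of \eqref{eq:4'}, and then eliminate the remaining $K_{p^*}(\ell^{1/p^*})$ by \eqref{eq:2}. The coefficient of $(1-\ell)K_p(\ell^{1/p})$ then collapses to $(p-2)-(p-1)=-1$, leaving exactly $E_p(\ell^{1/p})-(1-\ell)K_p(\ell^{1/p})$, as required.

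Honestly, there is no analytic obstacle here: once the substitution $\ell=k^p$ is fixed, both claims are pure algebra resting on Proposition~\ref{prop:Krelation}. The only point demanding care is the exponent and coefficient bookkeeping --- verifying $k^{p-1}=\ell^{1/p^*}$ and that the $(p-2)$ from \eqref{eq:4} cancels against the $-(p-1)$ coming from \eqref{eq:2} --- so I would double-check these two arithmetic steps to guard against an off-by-a-factor slip.
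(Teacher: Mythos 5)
Your proposal is correct and follows essentially the same route as the paper: substitute $k=\ell^{1/p}$ (so that $k^{p-1}=\ell^{1/p^*}$ and $(k')^p=1-\ell$) into \eqref{eq:2} and \eqref{eq:4} of Proposition~\ref{prop:Krelation}, multiply the first by $p^*$, and cancel the $K$-terms via $(p-2)-(p-1)=-1$ in the second. The paper phrases the last elimination as "using \eqref{eq:2'}" rather than \eqref{eq:2}, but this is the same relation up to the factor $p^*$, so there is no substantive difference.
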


\begin{proof}
Putting $k=\ell^{\frac1p}$ in \eqref{eq:2} and multiplying it by $p^*$,
we obtain \eqref{eq:2'} at once.
To deduce \eqref{eq:4'}, putting $k=\ell^\frac1p$ in \eqref{eq:4} 
and using \eqref{eq:2'} we have 
\begin{align*}
E_{p^*}(\ell^\frac{1}{p^*}) &-(1-\ell)K_{p^*}(\ell^\frac{1}{p^*})\\
&=E_p(\ell^\frac1p)+(p-2)(1-\ell)K_p(\ell^\frac1p)-(1-\ell)(p-1)K_p(\ell^{\frac1p})\\
&=E_p(\ell^\frac1p)-(1-\ell)K_p(\ell^\frac1p).
\end{align*}
Therefore we conclude \eqref{eq:4'}.
\end{proof}

Letting $\ell=0$ in \eqref{eq:2'} we have $p^*\pi_{p^*}=p\pi_p$,
which was indicated in \cite{BE,LE}.

\begin{prop}
\label{prop:2power}
\begin{align*}
K_p\left(\frac{1}{\sqrt[p]{2}}\right)&=\frac{\sqrt[p]{2}\Gamma (\frac{1}{2p})^2}
{4p\Gamma (\frac1p)  \cos{\frac{\pi}{2p}}},\\
E_p\left(\frac{1}{\sqrt[p]{2}}\right)&=\frac{\sqrt[p]{2}}{8p\Gamma(\frac1p)}
\left(\frac{\Gamma(\frac{1}{2p})^2}{\cos{\frac{\pi}{2p}}}
+\frac{2p\Gamma(\frac{1}{2p}+\frac12)^2}{\sin{\frac{\pi}{2p}}}
\right),
\end{align*}
where $\Gamma$ is the gamma function. 
\end{prop}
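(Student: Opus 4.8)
The plan is to evaluate $K_p(2^{-1/p})$ directly as a Beta integral by a symmetrizing substitution, and then to obtain $E_p(2^{-1/p})$ essentially for free from Legendre's relation (Theorem \ref{thm:p-legendre}) at the self-complementary modulus, where $k=k'=2^{-1/p}$.

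For the first formula, I would start from the second ($t$-)integral in \eqref{eq:CpEIK} with $k^p=1/2$ and substitute $s=t^p$ to reach $\frac1p\int_0^1 s^{1/p-1}(1-s)^{-1/p}(1-\frac12 s)^{-(1-1/p)}\,ds$. The one genuinely clever step is the Möbius substitution $s=2x/(1+x)$, under which $1-s=(1-x)/(1+x)$ and $1-\frac12 s=1/(1+x)$, so that all powers of $(1+x)$ collapse and the integral becomes $\frac{2^{1/p}}{p}\int_0^1 x^{1/p-1}(1-x^2)^{-1/p}\,dx$. A further substitution $y=x^2$ turns this into a Beta integral $\frac{2^{1/p}}{2p}B\!\left(\frac{1}{2p},1-\frac1p\right)$. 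Finally I would convert to Gamma functions and simplify using the reflection formula together with the half-angle identity $\sin(\pi/p)=2\sin\frac{\pi}{2p}\cos\frac{\pi}{2p}$ (and, if one prefers the stated $\Gamma(\frac{1}{2p})^2/\Gamma(\frac1p)$ shape, the duplication formula) to reach the closed form for $K_p$.

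For the second formula, I would evaluate \eqref{eq:p-legendre} at $k=k'=2^{-1/p}$, so that $k^p=(k')^p=1/2$ and hence $K_p'=K_p$, $E_p'=E_p$ at this point. The relation then collapses to $2K_p(2^{-1/p})E_p(2^{-1/p})-K_p(2^{-1/p})^2=\pi_p/2$, which I solve for $E_p(2^{-1/p})=\frac{K_p(2^{-1/p})}{2}+\frac{\pi_p}{4K_p(2^{-1/p})}$. Substituting the value of $K_p(2^{-1/p})$ from the first part together with $\pi_p=2\pi/(p\sin(\pi/p))$, the first term reproduces the $\Gamma(\frac{1}{2p})^2/\cos\frac{\pi}{2p}$ contribution at once, while the second term produces the $\Gamma(\frac{1}{2p}+\frac12)^2/\sin\frac{\pi}{2p}$ contribution after one application of the duplication formula $\Gamma(\frac{1}{2p})\Gamma(\frac{1}{2p}+\frac12)=2^{1-1/p}\sqrt\pi\,\Gamma(\frac1p)$.

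The routine-but-delicate part throughout is the bookkeeping of the Gamma-function identities (reflection, duplication, and the sine half-angle formula); recognizing that Legendre's relation at the self-dual point reduces to a single scalar equation in the two unknowns is the key insight for $E_p$. An alternative route for $K_p$ is to pass to Euler's integral representation and invoke Gauss's second summation theorem for $F(1-\frac1p,\frac1p;1;\frac12)$, which is legitimate since the third parameter equals $\frac{a+b+1}{2}$; I expect the direct substitution $s=2x/(1+x)$ to be more self-contained and better aligned with the elementary spirit of the paper.
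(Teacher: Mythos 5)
Your proposal is correct, and it splits into one part that coincides with the paper and one part that is genuinely different. For $K_p(2^{-1/p})$, your route and the paper's are essentially the same computation: the paper invokes Corollary \ref{cor:ippanka} at the self-complementary modulus to get $K_p(2^{-1/p})=\sqrt[p]{2}\,K_p^*(i_p)=\sqrt[p]{2}\int_0^1(1-t^{2p})^{-1/p}\,dt$ and then evaluates the Beta integral $\tfrac{1}{2p}B\bigl(\tfrac{1}{2p},1-\tfrac1p\bigr)$ with the reflection and half-angle identities; your M\"obius substitution $s=2x/(1+x)$ is exactly the change of variables $u^p=\tfrac12 t^p/(1-\tfrac12 t^p)$ hidden in the proof of Proposition \ref{prop:Krelation}, carried out explicitly, and it lands on the same Beta integral. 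For $E_p(2^{-1/p})$, however, you genuinely diverge: the paper again goes through Corollary \ref{cor:ippanka}, writing $E_p(2^{-1/p})$ in terms of $E_p^*(i_p)$ and $K_p^*(i_p)$ and evaluating a second Euler/Beta integral (which requires the extra identity for $\Gamma(\tfrac32-\tfrac{1}{2p})$), whereas you exploit Legendre's relation (Theorem \ref{thm:p-legendre}) at the self-dual point $k=k'=2^{-1/p}$, where it collapses to $2K_pE_p-K_p^2=\pi_p/2$, and solve for $E_p$; I checked that the resulting Gamma bookkeeping via the duplication formula reproduces the stated expression. Your route is shorter and arguably cleaner, but note the structural trade-off: the paper's proof keeps Proposition \ref{prop:2power} independent of Theorem \ref{thm:p-legendre}, which is precisely what allows the remark following the proposition (using these closed forms to re-derive the constant $C=\pi_p/2$ in Legendre's relation) to be non-circular. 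Your argument is legitimate in the paper as organized, since Theorem \ref{thm:p-legendre} is established earlier by the self-contained limit argument $k\to+0$, but it would become circular if one instead wanted Proposition \ref{prop:2power} to supply the constant in Legendre's relation. Your suggested alternative via Gauss's second summation theorem for $F\bigl(1-\tfrac1p,\tfrac1p;1;\tfrac12\bigr)$ is also valid (the third parameter is indeed $\tfrac{a+b+1}{2}$), though less in the elementary spirit of the paper than the substitution you carried out.
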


\begin{proof}
Let $k=1/\sqrt[p]{2}$, then $k'=1/\sqrt[p]{2}$.
By Corollary \ref{cor:ippanka} we have
\begin{align*}
K_p\left(\frac{1}{\sqrt[p]{2}}\right)
=\sqrt[p]{2}K_p^*(i_p)
=\sqrt[p]{2} \int_0^1 \frac{dt}{(1-t^{2p})^\frac1p}.
\end{align*}
Putting $t^{2p}=x$, we obtain
\begin{align*}
\int_0^1 \frac{dt}{(1-t^{2p})^\frac1p}
=\frac{1}{2p} B\left(\frac{1}{2p},1-\frac1p\right)
=\frac{\Gamma(\frac{1}{2p}) \Gamma(1-\frac1p)}
{2p\Gamma (1-\frac{1}{2p})},
\end{align*}
where $B$ is the beta function. 
Since
\begin{align}
\label{eq:Gamma1}
\Gamma \left(1-\frac1p\right)
&=\frac{\pi}{\Gamma(\frac1p)\sin{\frac{\pi}{p}}}
=\frac{\pi}{2\Gamma(\frac1p)\sin{\frac{\pi}{2p}}\cos{\frac{\pi}{2p}}},\\
\label{eq:Gamma2}
\Gamma \left(1-\frac{1}{2p}\right)
&=\frac{\pi}{\Gamma(\frac{1}{2p})\sin{\frac{\pi}{2p}}},
\end{align}
the first formula in the proposition follows.

Similarly, Corollary \ref{cor:ippanka} yields
\begin{align*}
E_{p}\left(\frac{1}{\sqrt[p]{2}}\right)
&=\frac{1}{\sqrt[p^*]{2}}((p-1)E_p^*(i_p)-(p-2)K_p^*(i_p)).
\end{align*}
Since
$$E_p^*(i_p)=\int_0^1\frac{1+t^p}{(1-t^{2p})^{\frac1p}}\,dt
=K_p^*(i_p)+\int_0^1 \frac{t^p}{(1-t^{2p})^{\frac1p}}\,dt,$$
we have
\begin{align*}
E_{p}\left(\frac{1}{\sqrt[p]{2}}\right)
&=\frac{1}{\sqrt[p^*]{2}}
\left(K_p^*(i_p)+(p-1) \int_0^1 \frac{t^p}{(1-t^{2p})^{\frac1p}} \right)\\
&=\frac{1}{2p\sqrt[p^*]{2}}
\left(\frac{\Gamma(\frac{1}{2p})\Gamma(1-\frac1p)}{\Gamma(1-\frac{1}{2p})}
+(p-1)\frac{\Gamma(\frac{1}{2p}+\frac12)\Gamma(1-\frac1p)}
{\Gamma(\frac32-\frac{1}{2p})}\right).
\end{align*}
By \eqref{eq:Gamma1}, \eqref{eq:Gamma2} and
$$\Gamma \left(\frac32-\frac{1}{2p}\right)
=\frac{\pi}{2p^*\Gamma(\frac{1}{2p}+\frac12)\cos{\frac{\pi}{2p}}},$$
the second formula in the proposition follows.
\end{proof}

For $p=2$ Proposition \ref{prop:Krelation} gives that 
$$K\left(\frac{1}{\sqrt{2}}\right)
=\frac{\Gamma(\frac14)^2}{4\sqrt{\pi}},\quad
E\left(\frac{1}{\sqrt{2}}\right)
=\frac{\Gamma(\frac14)^2+4\Gamma(\frac34)^2}{8\sqrt{\pi}},$$
which can be found in \cite[Theorem 1.7]{BB3}. 
See also \cite[Section 13.8]{EMOT2} and \cite[Section 22.8]{WW}.

On account of Proposition \ref{prop:2power}, it is possible to show
$C=\pi_p/2$ in the proof of Theorem \ref{thm:p-legendre} in another way. 
Indeed, letting $k=1/\sqrt[p]{2}$ we have
\begin{align*}
C
&=2K_p\left(\frac{1}{\sqrt[p]{2}}\right)E_p\left(\frac{1}{\sqrt[p]{2}}\right)
-K_p\left(\frac{1}{\sqrt[p]{2}}\right)^2\\
&=\frac{(\sqrt[p]{2})^2\Gamma(\frac{1}{2p})^2}{16p^2\Gamma(\frac1p)^2
\cos{\frac{\pi}{2p}}}
\left(\frac{\Gamma(\frac{1}{2p})^2}{\cos{\frac{\pi}{2p}}}
+\frac{2p\Gamma(\frac{1}{2p}+\frac12)^2}{\sin{\frac{\pi}{2p}}}\right)
-\frac{(\sqrt[p]{2})^2\Gamma(\frac{1}{2p})^4}{16p^2\Gamma(\frac1p)^2
\cos^2{\frac{\pi}{2p}}}\\
&=\frac{(\sqrt[p]{2})^2\Gamma(\frac{1}{2p})^2\Gamma(\frac{1}{2p}+\frac12)^2}
{4p\Gamma(\frac1p)^2\sin{\frac{\pi}{p}}}\\
&=\frac{2^{\frac2p-2}}{p\sin{\frac{\pi}{p}}}
\left(\frac{\sqrt{\pi}}{2^{\frac1p-1}}\right)^2=\frac{\pi_p}{2},
\end{align*}
where we have used the duplication formula 
$$\Gamma(2z)=\frac{2^{2z-1}}{\sqrt{\pi}} \Gamma(z) \Gamma \left(z+\frac12\right)$$
with $z=1/(2p)$.


The remainder of this section will be devoted to the study of relation
between complete $p$-elliptic integrals and hypergeometric series.

For a real number $a$ and a natural number $n$,
we define
$$(a)_n:=\frac{\Gamma(a+n)}{\Gamma(a)}
=(a+n-1)(a+n-2)\cdots (a+1) a.$$
We adopt the convention that $(a)_0:=1$.
For $|x|<1$ the series
$$F(a,b;c;x):=\sum_{n=0}^\infty \frac{(a)_n(b)_n}{(c)_n}\frac{x^n}{n!}$$
is called a \textit{Gaussian hypergeometric series}. 
See \cite{AAR,BB3,EMOT,WW} for more details.

\begin{lem}
\label{lem:integral}
For $n=0,1,2,\ldots$
$$\int_0^{\frac{\pi_p}{2}} \sin_p^{pn}{\theta}\,d\theta
=\frac{\pi_p}{2}\frac{(\frac1p)_n}{n!}.$$
\end{lem}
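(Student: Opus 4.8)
The plan is to reduce the integral to a Beta function. First I would change variables by $t=\sin_p\theta$; since $dt=\cos_p\theta\,d\theta=(1-\sin_p^p\theta)^{1/p}\,d\theta$ and $\sin_p(\pi_p/2)=1$, this is precisely the passage used in the second equalities of \eqref{eq:CpEIK}, and it gives
\[
\int_0^{\frac{\pi_p}{2}}\sin_p^{pn}\theta\,d\theta
=\int_0^1\frac{t^{pn}}{(1-t^p)^{1/p}}\,dt .
\]
Substituting $x=t^p$, so that $dt=\frac1p x^{\frac1p-1}\,dx$ and $t^{pn}=x^n$, converts this into a standard Beta integral:
\[
\int_0^1\frac{t^{pn}}{(1-t^p)^{1/p}}\,dt
=\frac1p\int_0^1 x^{n+\frac1p-1}(1-x)^{-\frac1p}\,dx
=\frac1p\,B\!\left(n+\frac1p,\,1-\frac1p\right),
\]
the integral converging because $n+\frac1p>0$ and $1-\frac1p>0$.

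Next I would write the Beta function in terms of Gamma functions,
\[
B\!\left(n+\frac1p,\,1-\frac1p\right)
=\frac{\Gamma\!\left(n+\frac1p\right)\Gamma\!\left(1-\frac1p\right)}{\Gamma(n+1)},
\]
and simplify each factor: $\Gamma(n+1)=n!$, while $\Gamma(n+\frac1p)=(\frac1p)_n\,\Gamma(\frac1p)$ by the very definition of the Pochhammer symbol recalled before the lemma. The Gamma product $\Gamma(\frac1p)\Gamma(1-\frac1p)=\pi/\sin(\pi/p)$ coming from the reflection formula \eqref{eq:Gamma1} then cancels, leaving
\[
\int_0^{\frac{\pi_p}{2}}\sin_p^{pn}\theta\,d\theta
=\frac{\pi}{p\sin(\pi/p)}\,\frac{(\frac1p)_n}{n!}
=\frac{\pi_p}{2}\,\frac{(\frac1p)_n}{n!},
\]
where the last step is just the definition $\pi_p=2\pi/(p\sin(\pi/p))$. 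The case $n=0$ reduces to $\int_0^{\pi_p/2}d\theta=\pi_p/2$, which matches $(\frac1p)_0/0!=1$.

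I do not expect a genuine obstacle here; the argument is a routine two-step substitution followed by the reflection formula. The only points needing care are the convergence at the endpoints and the correct tracking of exponents under $x=t^p$.

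As an alternative that avoids Beta functions, one could instead establish the Wallis-type recurrence $(pn)I_n=(pn-p+1)I_{n-1}$, where $I_n$ denotes the integral. This follows by writing $t^{pn}=t^{pn-p+1}\cdot t^{p-1}$ in $I_n=\int_0^1 t^{pn}(1-t^p)^{-1/p}\,dt$ and integrating by parts against $\frac{d}{dt}(1-t^p)^{1-1/p}=-(p-1)t^{p-1}(1-t^p)^{-1/p}$; the boundary terms vanish, and splitting $(1-t^p)^{1-1/p}=(1-t^p)^{-1/p}-t^p(1-t^p)^{-1/p}$ reproduces $I_{n-1}-I_n$. Since $\frac{\pi_p}{2}\frac{(\frac1p)_n}{n!}$ obeys the same recurrence with the same initial value $I_0=\pi_p/2$, induction then finishes the proof. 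I would present the Beta-function computation as the main argument, since it is the shortest and is consistent with the Gamma-function manipulations already used in Proposition~\ref{prop:2power}.
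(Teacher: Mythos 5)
Your proof is correct and follows essentially the same route as the paper: the substitution $\sin_p^p\theta=t$ (done there in one step) reduces the integral to $\frac1p B\left(n+\frac1p,1-\frac1p\right)$, which is then rewritten in terms of Gamma functions and the Pochhammer symbol. The only cosmetic difference is that the paper cancels $\Gamma\left(1-\frac1p\right)$ by factoring out $\frac1p B\left(\frac1p,1-\frac1p\right)=\frac{\pi_p}{2}$ as a Beta-function ratio, whereas you invoke the reflection formula together with the closed form $\pi_p=\frac{2\pi}{p\sin(\pi/p)}$.
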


\begin{proof}
Letting $\sin_p^p{\theta}=t$, we have 
$$
\int_0^{\frac{\pi_p}{2}} \sin_p^{pn}{\theta}\,d\theta
=\frac1p \int_0^1 t^{n+\frac1p-1}(1-t)^{-\frac1p}\,dt
=\frac1p B\left(n+\frac1p,1-\frac1p \right).
$$
Moreover,
\begin{align*}
\frac1p B\left(n+\frac1p,1-\frac1p \right)
&=\frac1p B\left(\frac1p,1-\frac1p \right)
\frac{B\left( n+\frac1p,1-\frac1p \right)}
{B\left(\frac1p,1-\frac1p \right)}\\
&=\frac{\pi_p}{2}\frac{\Gamma(n+\frac1p)}{\Gamma(\frac1p)\Gamma(n+1) }\\
&=\frac{\pi_p}{2}\frac{(\frac1p)_n}{n!},
\end{align*}
and the lemma follows.
\end{proof}

\begin{prop}
\label{prop:hypergeometricexpression}
For $k \in (0,1)$
\begin{align*}
K_p(k)
&=\frac{\pi_p}{2}F\left(\frac1p,1-\frac1p;1;k^p\right),\\
E_p(k)
&=\frac{\pi_p}{2}F\left(\frac1p,-\frac1p;1;k^p\right),\\
K^*_p(k)
&=\frac{\pi_p}{2}F\left(\frac1p,\frac1p;1;k^p\right),\\
E^*_p(k)
&=\frac{\pi_p}{2}F\left(\frac1p,\frac1p-1;1;k^p\right).
\end{align*}
\end{prop}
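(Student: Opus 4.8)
The plan is to expand each integrand as a power series in $\sin_p^p\theta$ via the generalized binomial theorem and then integrate term by term, reducing every identity to Lemma~\ref{lem:integral}. The only analytic input I would need is the binomial series
$$(1-x)^{-a}=\sum_{n=0}^\infty \frac{(a)_n}{n!}\,x^n,\qquad |x|<1,$$
valid for arbitrary real $a$. For a fixed $k\in(0,1)$ and every $\theta\in[0,\pi_p/2]$ one has $k^p\sin_p^p\theta\le k^p<1$, so with $x=k^p\sin_p^p\theta$ the series converges; moreover, since the argument is bounded by $k^p<1$ \emph{uniformly} in $\theta$, the convergence is uniform on $[0,\pi_p/2]$. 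This uniform bound is precisely what licenses the interchange of summation and integration, and it is the only step that is not pure bookkeeping, so I would state it explicitly.

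First I would carry out the case $K_p(k)$ as the model computation. Writing the integrand as $(1-k^p\sin_p^p\theta)^{-(1-1/p)}$ and applying the binomial series with $a=1-1/p$ gives, after interchanging sum and integral,
$$K_p(k)=\sum_{n=0}^\infty \frac{(1-\frac1p)_n}{n!}\,k^{pn}\int_0^{\frac{\pi_p}{2}}\sin_p^{pn}\theta\,d\theta.$$
Lemma~\ref{lem:integral} evaluates the inner integral as $\frac{\pi_p}{2}\frac{(1/p)_n}{n!}$, whence
$$K_p(k)=\frac{\pi_p}{2}\sum_{n=0}^\infty \frac{(\frac1p)_n(1-\frac1p)_n}{(n!)^2}\,(k^p)^n.$$
Since $(1)_n=n!$, the right-hand side is exactly $\frac{\pi_p}{2}F\!\left(\frac1p,1-\frac1p;1;k^p\right)$, the first claimed identity.

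The remaining three identities I would obtain by repeating this computation with only the binomial exponent changed. For $E_p(k)$ the exponent is $+1/p$, i.e. $a=-1/p$, contributing the factor $(-\frac1p)_n$ and hence $F(\frac1p,-\frac1p;1;k^p)$; for $K_p^*(k)$ the exponent is $-1/p$, i.e. $a=1/p$, giving $F(\frac1p,\frac1p;1;k^p)$; and for $E_p^*(k)$ the exponent is $1-1/p$, i.e. $a=1/p-1$, giving $F(\frac1p,\frac1p-1;1;k^p)$. In every case the factor $(\frac1p)_n$ in the numerator is supplied by Lemma~\ref{lem:integral}, the second Pochhammer symbol by the binomial expansion, and the denominator $(1)_n=n!$ matches the parameter $c=1$ of the hypergeometric series.

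I do not expect a genuine obstacle here: all of the substance sits inside Lemma~\ref{lem:integral}, and the rest is careful tracking of exponents and Pochhammer symbols. The one point requiring attention, which I would emphasize rather than gloss over, is the justification of term-by-term integration, and this is handled at a stroke by the uniform estimate $k^p\sin_p^p\theta\le k^p<1$. Note that the hypothesis $k<1$ is indispensable precisely because it keeps the argument of the binomial series uniformly inside its disk of convergence.
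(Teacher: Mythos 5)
Your proposal is correct and follows essentially the same route as the paper's own proof: expand the integrand by the binomial series, integrate term by term, and invoke Lemma~\ref{lem:integral} to produce the factor $(\tfrac1p)_n/n!$, with the second Pochhammer symbol coming from the binomial exponent. The only differences are cosmetic or additive --- you write the expansion directly with Pochhammer symbols where the paper uses $(-1)^n\binom{\frac1p-1}{n}$ and then converts, you justify the interchange of sum and integral (which the paper leaves implicit), and you carry out all four cases rather than leaving three to the reader.
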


\begin{proof}
Binomial series expansion gives
\begin{align*}
K_p(k)
=\int_0^{\frac{\pi_p}{2}} 
(1-k^p\sin_p^p{\theta})^{\frac1p-1}\,d\theta
=\sum_{n=0}^\infty (-1)^n \binom{\frac1p-1}{n} k^{pn}
\int_0^{\frac{\pi_p}{2}} \sin_p^{pn}{\theta}\,d\theta.
\end{align*}
Here, using Lemma \ref{lem:integral} and the fact
\begin{align*}
(-1)^n \binom{\frac1p-1}{n}
&=(-1)^n \frac{(\frac1p-1)(\frac1p-2)\cdots(\frac1p-n)}{n!}
=\frac{(1-\frac1p)_n}{(1)_n},
\end{align*}
we see that 
$$K_p(k)=\frac{\pi_p}{2}\sum_{n=0}^\infty
\frac{(\frac1p)_n (1-\frac1p)_n}{(1)_n}\frac{k^{pn}}{n!}
=\frac{\pi_p}{2} F\left(\frac1p,1-\frac1p;1;k^p\right).$$
The other cases are similar and we left to the reader. 
\end{proof}

A hypergeometric series $F(a,b;c;x)$ satisfies 
the hypergeometric differential equation
$$x(1-x)\frac{d^2y}{dx^2}+(c-(a+b+1)x)\frac{dy}{dx}-aby=0.$$
From the fact and Proposition \ref{prop:hypergeometricexpression}
we can also prove Proposition \ref{prop:p-hypergeometric}.

As mentioned in the Introduction, 
the Borweins \cite[Section 5.5]{BB3} define the 
generalized complete elliptic 
integrals of the first kind $\mathrm{K}_s(k)$ and 
of the second kind $\mathrm{E}_s(k)$
by \eqref{eq:Ks} and \eqref{eq:Es} respectively.
They indicate that these functions satisfy
\begin{equation}
\label{eq:legendre-borwein}
\mathrm{K}_s'(k)\mathrm{E}_s(k)+\mathrm{K}_s(k)\mathrm{E}_s'(k)
-\mathrm{K}_s(k)\mathrm{K}_s'(k)=\frac{\pi}{2}\frac{\cos{\pi s}}{1+2s},
\end{equation}
where $\mathrm{K}_s'(k):=\mathrm{K}_s(k'),\
\mathrm{E}_s'(k):=\mathrm{E}_s(k')$ and $k':=\sqrt{1-k^2}$.
Letting $s=1/p-1/2$ in this equality we can also prove Theorem \ref{thm:p-legendre}.

They obtained \eqref{eq:legendre-borwein}, relying on the following identity of
hypergeometric functions with $a=-b=c=s$:
\begin{multline}
\label{eq:elliott}
F\left({{\frac12+a,-\frac12-c}\atop{a+b+1}} ;x\right)
F\left({{\frac12-a,c+\frac12}\atop{b+c+1}} ;1-x\right)\\
+F\left({{a+\frac12,\frac12-c}\atop{a+b+1}};x\right)
F\left({{-(a+\frac12),c+\frac12}\atop{b+c+1}};1-x\right)\\
-F\left({{a+\frac12,\frac12-c}\atop{a+b+1}};x\right)
F\left({{\frac12-a,c+\frac12}\atop{b+c+1}};1-x\right)\\
=\frac{\Gamma(a+b+1)\Gamma(b+c+1)}{\Gamma(a+b+c+\frac32)\Gamma(b+\frac12)},
\end{multline}
which was given by Elliott \cite{El}
(see also \cite{AQVV}, \cite[Theorem 3.2.8]{AAR} and \cite[(13) p.\,85]{EMOT}).
In contrast to this, our approach to Theorem \ref{thm:p-legendre} is more self-contained.

Finally in this section, 
we refer the reader to \cite{AQVV}
for generalized elliptic integrals 
in geometric function theory and the
relationship with hypergeometric functions.
In \cite{AQVV} they also define a generalized Jacobian elliptic function
related to $\mathrm{K}_s(k)$. The author \cite{T2} 
produces generalized Jacobian elliptic functions
with two parameters $p$ and $q$ related to $K_p(k)$, 
but as real functions (cf. \cite{T}).


\section{Application}

In this section, we will apply the complete $p$-elliptic integrals
\eqref{eq:CpEIK} and \eqref{eq:CpEIE}
to compute $\pi_p$, and prove Theorem \ref{thm:main}.
For the very special case of $\pi_p$ with $p=3$,
we are able to obtain a computation formula like \eqref{eq:pi}
for $\pi$ by Salamin \cite{Sa} and Brent \cite{Br}.

Let $a \geq b>0$. Consider the sequences
$\{a_n\}$ and $\{b_n\}$ satisfying $a_0=a, b_0=b$ and 
$$a_{n+1}=\frac{a_n+2b_n}{3}, \quad b_{n+1}=\sqrt[3]{\frac{(a_n^2+a_nb_n+b_n^2)
b_n}{3}},\quad n=0,1,2,\cdots.$$
It is easy to see that $a_n \geq b_n$ for any $n$,
$\{a_n\}$ is decreasing and $\{b_n\}$ is increasing. 
Hence each sequence converges to 
a limit as $n \to \infty$. Moreover, since
\begin{equation}
\label{eq:cubic}
a_{n+1}-b_{n+1} \leq \frac{a_n+2b_n}{3}-b_n = \frac13(a_n-b_n),
\end{equation}
these limits are same. We will denote by $M_3(a,b)$ the common limit
for $a$ and $b$.     

To show Theorem \ref{thm:main}, 
the following identity by Ramanujan for the hypergeometric function
$F(1/3,2/3;1;x)$ is extremely important. 

\begin{lem}[Ramanujan's cubic transformation]
\label{lem:ramanujan}
For $0<k \leq 1$
\begin{equation}
\label{eq:ramanujan}
F\left(\frac13,\frac23;1;1-k^3\right)
=\frac{3}{1+2k}F\left(\frac13,\frac23;1;\left(\frac{1-k}{1+2k}\right)^3\right).
\end{equation}
\end{lem}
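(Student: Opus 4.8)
The plan is to translate \eqref{eq:ramanujan} into an equivalent identity for $K_3$ and then pin down both sides as solutions of one and the same linear ordinary differential equation. By Proposition \ref{prop:hypergeometricexpression} with $p=3$ we have $F(1/3,2/3;1;\kappa^3)=(2/\pi_3)K_3(\kappa)$. Taking $\kappa=(1-k^3)^{1/3}=k'$ on the left-hand side of \eqref{eq:ramanujan} and $\kappa=(1-k)/(1+2k)$ on the right-hand side, the claimed identity is seen to be equivalent to
\begin{equation*}
K_3'(k)=\frac{3}{1+2k}\,K_3\!\left(\frac{1-k}{1+2k}\right),\qquad 0<k<1 .
\end{equation*}
From Proposition \ref{prop:p-hypergeometric} (with $p=3$) both $K_3(k)$ and $K_3'(k)$ solve the second-order linear equation
\begin{equation*}
k(1-k^3)y''+(1-4k^3)y'-2k^2y=0 , \tag{$\star$}
\end{equation*}
and these two solutions are linearly independent near $k=1$: since $c-a-b=0$ for $F(1/3,2/3;1;\cdot)$, one has $K_3(k)\to\infty$ as $k\to1^-$, whereas $K_3'(k)=K_3((1-k^3)^{1/3})\to K_3(0)=\pi_3/2$. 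Hence $\{K_3,K_3'\}$ is a fundamental system for $(\star)$, and it suffices to show that the right-hand side above is a solution of $(\star)$ with the correct behaviour at $k=1$.

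Next I would set $\phi(k):=(1-k)/(1+2k)$ and
\begin{equation*}
L(k):=\frac{3}{1+2k}\,K_3(\phi(k)),
\end{equation*}
and show by a direct chain-rule computation that $L$ again satisfies $(\star)$. Since $K_3(m)$ satisfies the same equation in the variable $m$, this amounts to checking that the M\"obius change of variable $m=\phi(k)$ together with the prefactor $3/(1+2k)$ carries $(\star)$ into itself; the auxiliary identities $1-\phi(k)^3=9k(1+k+k^2)/(1+2k)^3$ and $\phi'(k)=-3/(1+2k)^2$ keep the algebra manageable. This verification is the computational heart of the argument and the step I expect to be the main obstacle, because it is precisely here that the special shape of the cubic transformation must intervene to make the pulled-back equation close up.

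Once $L$ is known to solve $(\star)$, it must be a constant linear combination $L=\alpha K_3+\beta K_3'$. To evaluate the constants I would let $k\to1^-$: there $\phi(k)\to0$, so $L(k)\to(3/3)K_3(0)=\pi_3/2$ remains finite while $K_3(k)\to\infty$, and boundedness forces $\alpha=0$; matching the finite values $L(1)=\pi_3/2=K_3'(1)$ then gives $\beta=1$. Therefore $L(k)=K_3'(k)$, which is the desired reformulation of \eqref{eq:ramanujan}. As a fallback, should the middle-step bookkeeping prove unwieldy, one can instead attempt to prove the $K_3$-identity directly by a rational substitution in the integral representation \eqref{eq:CpEIK}, matching the integrand of $K_3'(k)=\int_0^1 dt\,[(1-t^3)^{1/3}(1-(1-k^3)t^3)^{2/3}]^{-1}$ to that of the right-hand side.
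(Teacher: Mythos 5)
Your proposal is correct and is essentially the paper's own proof: the same reduction via Proposition \ref{prop:hypergeometricexpression} to $K_3(k')=\frac{3}{1+2k}K_3\left(\frac{1-k}{1+2k}\right)$, the same second-order equation from Proposition \ref{prop:p-hypergeometric}, and the same conclusion by uniqueness of solutions that stay bounded at the regular singular point $k=1$ (your fundamental-system argument with $\{K_3,K_3'\}$ is just a spelled-out version of the paper's appeal to the double indicial root $0$). The verification you deferred as the ``computational heart'' is exactly the step the paper carries out, and it tames the algebra by expressing $df/dk$ through the first-order system of Proposition \ref{prop:p-differential} (introducing $E_3(\ell)$ with $\ell=(1-k)/(1+2k)$) and differentiating once more, rather than pulling back the second-order equation by the chain rule as you suggest.
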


\begin{proof}
This identity has been proved by, for instance, 
the Borweins \cite{BB2}, Berndt et al. \cite[Corollary 2.4]{BBG}
or \cite[Corollary 2.4 and (2.25)]{Be}, and Chan \cite{Ch},
though Ramanujan did not leave his proof.
We will present a new proof with elementary calculation.

Since Proposition \ref{prop:hypergeometricexpression} yields
$$K_3(k)=\frac{\pi_3}{2}F\left(\frac13,\frac23;1;k^3\right),$$
so that \eqref{eq:ramanujan} is equivalent to
\begin{equation}
\label{eq:RCT}
K_3(k')=\frac{3}{1+2k}K_3\left(\frac{1-k}{1+2k}\right).
\end{equation}
We have known from Proposition 
\ref{prop:p-hypergeometric} that $K_3(k')$ satisfies
\begin{equation}
\label{eq:hRCT}
\frac{d}{dk}\left(k(k')^3\frac{dy}{dk}\right)=2k^2y.
\end{equation}
To show \eqref{eq:RCT} we will verify that 
the function of right-hand side of \eqref{eq:RCT}
also satisfies \eqref{eq:hRCT}.
Now we let
$$f(k)=\frac{3}{1+2k}K_3\left(\frac{1-k}{1+2k}\right).$$
Applying Proposition \ref{prop:p-differential}
we have
\begin{equation}
\label{eq:dif}
\frac{df(k)}{dk}=\frac{f(k)}{1-k}-\frac{(1+2k)E_3(\ell)}{k(k')^3},
\end{equation}
where $\ell=(1-k)/(1+2k)$.
Thus, differentiating both sides of 
$$k(k')^3\frac{df(k)}{dk}
=(1+k+k^2)kf(k)-(1+2k)E_3(\ell)$$
gives
\begin{multline}
\label{eq:difdif}
\frac{d}{dk}\left(k(k')^3\frac{df(k)}{dk}\right)\\
=(1+2k+3k^2)f(k)+(1+k+k^2)k\frac{df(k)}{dk}
-\left(2E_3(\ell)+(1+2k)\frac{dE_3(\ell)}{dk}\right).
\end{multline}
Here, by Proposition \ref{prop:p-differential}
$$\frac{dE_3(\ell)}{dk}
=\frac{f(k)}{1-k}-\frac{3E_3(\ell)}{(1-k) (1+2k)}.$$
Applying this and \eqref{eq:dif} to \eqref{eq:difdif}, 
we see that the right-hand side of 
\eqref{eq:difdif} is equal to $2k^2f(k)$.  
This shows that $f(k)$ also satisfies \eqref{eq:hRCT} as $K_3(k')$ does.

The equation \eqref{eq:hRCT} has a regular singular point at $k=1$
and the roots of the associated indicial equation are both $0$.
Thus, it follows from the theory of ordinary differential equations 
that the functions $K_3(k')$ and $f(k)$, which agree at $k=1$, must be equal.
This concludes the lemma.
\end{proof}

\begin{prop}
\label{prop:KE}
Let $0 \leq k<1$, then
\begin{enumerate}
\item $K_3(k)=\dfrac{1}{1+2k} K_3\left(\dfrac{\sqrt[3]{9(1+k+k^2)k}}{1+2k}\right)$,
\item $K_3(k)=\dfrac{3}{1+2k'} K_3\left(\dfrac{1-k'}{1+2k'}\right)$,
\item $E_3(k)=\dfrac{1+2k}{3}E_3\left(\dfrac{\sqrt[3]{9(1+k+k^2)k}}{1+2k}\right)
+\dfrac{(1-k)(2+k)}{3}K_3(k)$,
\item $E_3(k)=(1+2k')E_3\left(\dfrac{1-k'}{1+2k'}\right)-k'(1+k')K_3(k)$,
\end{enumerate}
where $k'=\sqrt[3]{1-k^3}$.
\end{prop}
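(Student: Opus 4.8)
The four identities split into two statements about $K_3$, namely (i) and (ii), and two about $E_3$, namely (iii) and (iv). My plan is to derive (i) and (ii) as direct reformulations of Ramanujan's cubic transformation \eqref{eq:RCT}, then to obtain (iv) by differentiating (ii) and invoking the first-order system of Proposition \ref{prop:p-differential}, and finally to deduce (iii) from (iv) by re-applying it at a transformed modulus, so that only a single differentiation is required. The one algebraic fact that drives everything is the elementary identity
\begin{equation*}
(1+2k)^3-9k(1+k+k^2)=(1-k)^3,
\end{equation*}
which shows that if $x=\tfrac{1-k}{1+2k}$ then $x'=\tfrac{\sqrt[3]{9k(1+k+k^2)}}{1+2k}$, and conversely.

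For (ii) I would simply put $k\mapsto k'$ in \eqref{eq:RCT}; since $(k')'=k$, the left-hand side becomes $K_3(k)$ and the right-hand side is exactly $\tfrac{3}{1+2k'}K_3\!\left(\tfrac{1-k'}{1+2k'}\right)$. For (i) I would instead apply \eqref{eq:RCT} with modulus $x=\tfrac{1-k}{1+2k}$: the displayed identity gives $x'=m$, where $m:=\tfrac{\sqrt[3]{9k(1+k+k^2)}}{1+2k}$, while $\tfrac{1-x}{1+2x}=k$ and $1+2x=\tfrac{3}{1+2k}$; substituting these yields $K_3(m)=(1+2k)K_3(k)$, which is (i). One should check in passing that $m\in[0,1)$, which is again equivalent to $(1-k)^3>0$.

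The real work lies in (iv). Writing (ii) as $(1+2k')K_3(k)=3K_3(\ell)$ with $\ell:=\tfrac{1-k'}{1+2k'}$, I would differentiate in $k$, using $dk'/dk=-(k/k')^2$, the resulting $d\ell/dk=\tfrac{3k^2}{(k')^2(1+2k')^2}$, and Proposition \ref{prop:p-differential} to rewrite every $dK_3/dk$ as a combination of $E_3$ and $K_3$. After clearing denominators (here one needs $\ell(\ell')^3=\tfrac{9k'k^3}{(1+2k')^4}$, a consequence of the displayed identity together with $1-(k')^3=k^3$) and eliminating $K_3(\ell)$ by means of (ii), the equation collapses to (iv). The crux is that the accumulated coefficient of $K_3(k)$, namely $-2k^3k'-(1+2k')(k')^3+3k'(1+k'+k'^2)$, simplifies to $k'(1+k')(1+2k')$ precisely because $k^3=1-(k')^3$; this is the one place where the full cubic relation between $k$ and $k'$ is needed, and I expect this simplification to be the main obstacle.

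Finally, for (iii) I would avoid a second differentiation and instead apply the just-proved (iv) at the modulus $m$. Its complementary modulus is $m'=\tfrac{1-k}{1+2k}$, so $1+2m'=\tfrac{3}{1+2k}$, $\tfrac{1-m'}{1+2m'}=k$, and $m'(1+m')=\tfrac{(1-k)(2+k)}{(1+2k)^2}$; combined with $K_3(m)=(1+2k)K_3(k)$ from (i), identity (iv) at $m$ becomes $E_3(m)=\tfrac{3}{1+2k}E_3(k)-\tfrac{(1-k)(2+k)}{1+2k}K_3(k)$, and multiplying by $\tfrac{1+2k}{3}$ and rearranging gives (iii). Everything outside the differentiation in (iv) is thus substitution into \eqref{eq:RCT} and bookkeeping with the single cubic identity.
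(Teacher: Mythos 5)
Your proposal is correct and takes essentially the same route as the paper: (ii) is Ramanujan's transformation \eqref{eq:RCT} with $k$ and $k'$ interchanged, (i) is the same identity read at the transformed modulus, (iv) is obtained by differentiating (ii) and eliminating derivatives via Proposition \ref{prop:p-differential} and $K_3(\ell)$ via (ii), and (iii) follows by applying (iv) at the transformed modulus together with (i). Your key simplifications (the cubic identity $(1+2k)^3-9k(1+k+k^2)=(1-k)^3$, the evaluation of $\ell(\ell')^3$, and the collapse of the $K_3$-coefficient to $k'(1+k')(1+2k')$ using $k^3+(k')^3=1$) are exactly the computations carried out in the paper's proof.
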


\begin{proof}
It is obvious that (ii) is equivalent to \eqref{eq:RCT},
hence to Lemma \ref{lem:ramanujan}.
Therefore we will show (i), (iv) and (iii) in this order.

(i) Setting in (ii)
$$\frac{1-k'}{1+2k'}=\ell,$$
we get $0 \leq \ell<1$ and 
$$k'=\frac{1-\ell}{1+2\ell},\quad 
k=\frac{\sqrt[3]{9(1+\ell+\ell^2)\ell}}{1+2\ell}.$$
Then (ii) is equivalent to 
$$K_3\left(\frac{\sqrt[3]{9 (1+\ell+\ell^2)\ell}}{1+2\ell}\right)=(1+2\ell)K_3(\ell).$$
Replacing $\ell$ by $k$, we obtain (i).

(iv) Let $\ell$ be the number above, then 
$$\frac{d\ell}{dk}
=\frac{3k^2}{(1+2k')^2(k')^2}.$$
It follows from (ii) that $(1+2k')K_3(k)=3K_3(\ell)$.
Differentiating both sides in $k$, we have
\begin{multline*}
-2\left(\frac{k}{k'}\right)^2K_3(k)+\frac{1+2k'}{k(k')^3}E_3(k)-\frac{1+2k'}{k}K_3(k)\\
=\frac{9k^2}{(1+2k')^2(k')^2} \cdot \frac{1}{\ell (\ell')^3}E_3(\ell)
-\frac{9k^2}{(1+2k')^2(k')^2\ell} K_3(\ell).
\end{multline*}
Applying 
$$\ell=\frac{1-k'}{1+2k'},\quad \ell'=\frac{\sqrt[3]{9(1+k'+(k')^2)k'}}{1+2k'}$$
and (ii), we see that the right-hand side is written as   
$$\frac{(1+2k')^2}{k(k')^3}E_3(\ell)-\frac{3k^2}{(1-k')(k')^2}K_3(k).$$
Thus we have
$$\frac{1+2k'}{k(k')^3}E_3(k)=\frac{(1+2k')^2}{k(k')^3}E_3(\ell)-\frac{(1+2k')(1+k')}{k(k')^2}K_3(k).$$
Multiplying this by $k(k')^3/(1+2k')$, we obtain (iv).

(iii) It is obvious that (iv) can be written in $\ell$, that is,
\begin{multline*}
E_3\left(\frac{\sqrt[3]{9(1+\ell+\ell^2)\ell}}{1+2\ell}\right)\\
=\frac{3}{1+2\ell}E_3(\ell)-\frac{(1-\ell)(2+\ell)}{(1+2\ell)^2}
K_3\left(\frac{\sqrt[3]{9 (1+\ell+\ell^2)\ell}}{1+2\ell}\right).
\end{multline*}
From (i) we have (iii). The proof is complete.
\end{proof}

Let us introduce auxiliary functions 
\begin{align*}
I_p(a,b)
&:=\int_0^{\frac{\pi_p}{2}} \frac{d\theta}{(a^p\cos_p^p{\theta}
+b^p\sin_p^p{\theta})^{1-\frac1p}},\\
J_p(a,b)
&:=\int_0^{\frac{\pi_p}{2}} (a^p\cos_p^p{\theta}
+b^p\sin_p^p{\theta})^{\frac1p}\,d\theta.
\end{align*}
It is easy to check that $K_p(k),\ K_p'(k),\ E_p(k)$ and $E_p'(k)$ are
written as
\begin{align*}
K_p(k)&=I_p(1,k'),\ K_p'(k)=I_p(1,k),\\
E_p(k)&=J_p(1,k'),\ E_p'(k)=J_p(1,k).
\end{align*}

In the remainder of this section \textit{we assume $p=3$}.
We will write $I_3(a,b)$ and $J_3(a,b)$ simplicity
$I(a,b)$ and $J(a,b)$ respectively when no confusion can arise.

The next lemma is crucial to show Theorem \ref{thm:p-agm}.
\begin{lem}
\label{lem:I}
For $a \geq b>0$,
$$aI(a,b)=\frac{a+2b}{3}
I\left(\frac{a+2b}{3},\sqrt[3]{\frac{(a^2+ab+b^2)b}{3}}\right).$$
\end{lem}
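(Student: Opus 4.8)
The plan is to reduce the claimed identity to a single instance of Ramanujan's cubic transformation (Proposition~\ref{prop:KE}(ii)) by exploiting the homogeneity of $I(a,b)$. First I would note that, for $p=3$, the integrand of $I(a,b)$ is positively homogeneous of degree $-2$ in $(a,b)$: extracting $\lambda^{3}$ from $\lambda^{3}a^{3}\cos_3^3{\theta}+\lambda^{3}b^{3}\sin_3^3{\theta}$ gives
\[
I(\lambda a,\lambda b)=\lambda^{-2}I(a,b),\qquad \lambda>0 .
\]
Applying this with $\lambda=a$ on the left and with $\lambda=(a+2b)/3$ on the right turns both sides into normalized integrals of the form $I(1,\cdot)$. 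Since $K_3(k)=I(1,k')$, every such normalized integral is a value of $K_3$, so the lemma becomes an identity among values of $K_3$.

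Writing $m:=b/a\in(0,1]$, the left-hand side is $aI(a,b)=\frac1a\,I(1,m)=\frac1a K_3(k)$ with $k'=m$. For the right-hand side, set $B:=\sqrt[3]{(a^{2}+ab+b^{2})b/3}$; a short simplification (dividing numerator and denominator by $a$, and using $3\sqrt[3]{X/3}=\sqrt[3]{9X}$) shows that the new normalized modulus is
\[
\frac{3B}{a+2b}=\frac{\sqrt[3]{9(1+m+m^{2})m}}{1+2m}=:\tilde k',
\]
so the right-hand side equals $\frac{3}{a+2b}\,I(1,\tilde k')=\frac{3}{a+2b}K_3(\tilde k)$, where $\tilde k$ is the complementary modulus of $\tilde k'$. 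The step I regard as the crux is the identification of $\tilde k$ with the cubic-transformation modulus: I would verify the polynomial identity
\[
9m(1+m+m^{2})+(1-m)^{3}=(1+2m)^{3},
\]
which shows $\tilde k^{3}+\tilde k'^{3}=1$ with $\tilde k=\dfrac{1-m}{1+2m}$.

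With this in hand the proof closes immediately. Proposition~\ref{prop:KE}(ii), applied with complementary modulus equal to $m$, states
\[
K_3(k)=\frac{3}{1+2m}\,K_3\!\left(\frac{1-m}{1+2m}\right)=\frac{3}{1+2m}K_3(\tilde k).
\]
Multiplying by $\frac1a$ and using $a+2b=a(1+2m)$ gives
\[
aI(a,b)=\frac1a K_3(k)=\frac{3}{a(1+2m)}K_3(\tilde k)=\frac{3}{a+2b}K_3(\tilde k),
\]
which is exactly the right-hand side computed above. The only genuine work is therefore the modulus bookkeeping of the middle paragraph; the homogeneity reduction and the final substitution are routine, and no new analysis is needed beyond the already-established cubic transformation.
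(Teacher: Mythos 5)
Your proof is correct and follows essentially the same route as the paper: both reduce the identity via homogeneity of $I$ to a statement about $K_3$ and then invoke Proposition \ref{prop:KE}(ii) with complementary modulus $b/a$, your polynomial identity $9m(1+m+m^2)+(1-m)^3=(1+2m)^3$ being exactly the algebra implicit in the paper's step identifying the new modulus. The only cosmetic difference is that you work both sides toward a common middle, whereas the paper transforms the left side into the right in a single chain.
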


\begin{proof}
From Proposition \ref{prop:KE} (ii) (with $k$ replaced by $k'$) we get
\begin{align*}
aI(a,b)
&=
\frac1a K_3'\left(\frac{b}{a}\right)\\
&=\frac{3}{a+2b}K_3\left(\frac{a-b}{a+2b}\right)\\
&=\frac{3}{a+2b}I\left(1,\frac{\sqrt[3]{9(a^2+ab+b^2)b}}{a+2b}\right)\\
&=\frac{a+2b}{3}I\left(\frac{a+2b}{3},\sqrt[3]{\frac{(a^2+ab+b^2)b}{3}}\right).
\end{align*}
This proves the lemma.
\end{proof}

Lemma \ref{lem:I} implies that
$\{a_nI(a_n,b_n)\}$ is a constant sequence:
\begin{equation}
\label{eq:constant}
aI(a,b)=a_1I(a_1,b_1)=a_2I(a_2,b_2)=\cdots=a_nI(a_n,b_n)=\cdots.
\end{equation}
Letting $n \to \infty$ in \eqref{eq:constant} we have

\begin{prop}
\label{prop:KM}
For $a \geq b>0$
$$aI(a,b)=\frac{\pi_3}{2}\frac{1}{M_3(a,b)}.$$
\end{prop}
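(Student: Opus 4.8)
The final statement to prove is Proposition~\ref{prop:KM}: for $a \geq b > 0$, $aI(a,b) = \frac{\pi_3}{2}\frac{1}{M_3(a,b)}$.

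The plan is to pass to the limit in the invariance relation \eqref{eq:constant}, which has already been established as a direct consequence of Lemma~\ref{lem:I}. That relation states that the quantity $a_nI(a_n,b_n)$ is independent of $n$, equalling $aI(a,b)$ for all $n$. Since $a_n \to M_3(a,b)$ and $b_n \to M_3(a,b)$ as $n \to \infty$ (both sequences converge to the common limit $M_3(a,b)$, by the monotonicity and the contraction estimate \eqref{eq:cubic}), I would evaluate $\lim_{n\to\infty} a_n I(a_n,b_n)$ by moving the limit inside $I$.

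The main work, then, is to identify $\lim_{n\to\infty} I(a_n,b_n) = I(M,M)$ where $M = M_3(a,b)$, and to compute $I(M,M)$ explicitly. First I would observe that when both arguments coincide, the integrand simplifies: using $\cos_3^3{\theta}+\sin_3^3{\theta}=1$,
\[
I(M,M)=\int_0^{\frac{\pi_3}{2}} \frac{d\theta}{(M^3\cos_3^3{\theta}+M^3\sin_3^3{\theta})^{1-\frac13}}
=\frac{1}{M^2}\int_0^{\frac{\pi_3}{2}} d\theta=\frac{1}{M^2}\frac{\pi_3}{2}.
\]
Combining this with the continuity step, $aI(a,b)=\lim_n a_n I(a_n,b_n)=M\cdot I(M,M)=M\cdot\frac{\pi_3}{2M^2}=\frac{\pi_3}{2M}$, which is exactly the assertion with $M=M_3(a,b)$.

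The one point requiring care—and the only genuine obstacle—is justifying the interchange of limit and integral, i.e.\ that $I(a_n,b_n)\to I(M,M)$. I expect this to follow from a uniform control on the integrand: since $a_n$ is decreasing to $M$ and $b_n$ is increasing to $M$ with $a_n\geq b_n\geq b_0=b>0$, the quantity $a_n^3\cos_3^3{\theta}+b_n^3\sin_3^3{\theta}$ is bounded below by $b^3(\cos_3^3{\theta}+\sin_3^3{\theta})=b^3>0$ uniformly in $\theta$ and $n$, so the integrands are uniformly bounded by the integrable constant $1/b^2$. Hence dominated convergence (or uniform convergence of the integrands, which also holds here) legitimizes passing the limit through the integral. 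With that justification in hand the proof reduces to the elementary computation above.
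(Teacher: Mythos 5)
Your proof is correct and takes essentially the same route as the paper, which simply lets $n \to \infty$ in the constant sequence $a_nI(a_n,b_n)=aI(a,b)$ obtained from Lemma \ref{lem:I}. The only difference is that you spell out the justification the paper leaves implicit—namely the computation $I(M,M)=\frac{\pi_3}{2M^2}$ via $\cos_3^3\theta+\sin_3^3\theta=1$ and the dominated convergence argument (using $a_n^3\cos_3^3\theta+b_n^3\sin_3^3\theta\geq b^3>0$) that legitimizes passing the limit inside the integral.
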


From above, Theorem \ref{thm:p-agm} immediately follows. 

\begin{proof}[Proof of Theorem \ref{thm:p-agm}]
Put $a=1$ and $b=k'$ in Proposition \ref{prop:KM}.
\end{proof}

Let $I_n:=I(a_n,b_n),\ J_n:=J(a_n,b_n)$, then
\begin{lem}
\label{lem:IJ}
For $a \geq b>0$
$$3J_{n+1}-J_n=a_nb_n(a_n+b_n)I_n, \quad n=0,1,2,\ldots.$$
\end{lem}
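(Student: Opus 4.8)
The plan is to strip the integrals $I_n,J_n$ down to the one-variable functions $K_3'$ and $E_3'$ by exploiting the homogeneity of $I$ and $J$, and then to recognize the resulting functional identity as a rearrangement of Proposition~\ref{prop:KE}(iv). First I would record that $I(a,b)=a^{-2}I(1,b/a)$ and $J(a,b)=aJ(1,b/a)$, which follow by factoring $a^3$ out of each integrand, together with $I(1,m)=K_3'(m)$ and $J(1,m)=E_3'(m)$ from the defining relations $K_p'(k)=I_p(1,k)$, $E_p'(k)=J_p(1,k)$. Writing $m_n:=b_n/a_n\in(0,1]$, this gives $I_n=a_n^{-2}K_3'(m_n)$ and $J_n=a_nE_3'(m_n)$, so the whole lemma becomes a statement about $K_3'$ and $E_3'$ evaluated at $m_n$ and $m_{n+1}$.

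Next I would express the iteration purely in terms of $m_n$. A direct computation gives $a_{n+1}=a_n(1+2m_n)/3$ and
\[
m_{n+1}=\frac{\sqrt[3]{9(1+m_n+m_n^2)m_n}}{1+2m_n}=:\Phi(m_n),
\]
the same map $\Phi$ appearing in Proposition~\ref{prop:KE}(i). Substituting $J_{n+1}=a_{n+1}E_3'(m_{n+1})=a_n\tfrac{1+2m_n}{3}E_3'(\Phi(m_n))$ and $a_nb_n(a_n+b_n)I_n=a_n\,m_n(1+m_n)K_3'(m_n)$ into the claimed identity and cancelling the common factor $a_n$, the lemma reduces to the single functional identity
\[
(1+2m_n)\,E_3'(\Phi(m_n))-E_3'(m_n)=m_n(1+m_n)\,K_3'(m_n).
\]

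To close this, I would use the elementary algebraic fact that
\[
1-\Phi(m)^3=\frac{(1-m)^3}{(1+2m)^3},\qquad\text{hence}\qquad \Phi(m)'=\frac{1-m}{1+2m},
\]
so that $E_3'(\Phi(m))=E_3(\Phi(m)')=E_3\!\left(\frac{1-m}{1+2m}\right)$. Then Proposition~\ref{prop:KE}(iv), applied with modulus $k=m'$ (so that $k'=m$ and $(1-k')/(1+2k')=(1-m)/(1+2m)$), reads
\[
E_3'(m)=(1+2m)\,E_3'(\Phi(m))-m(1+m)\,K_3'(m),
\]
which is exactly the required identity after transposing. The bookkeeping with complementary moduli — keeping straight that $I(1,m)$ is $K_3'$ rather than $K_3$, and choosing $k=m'$ so that Proposition~\ref{prop:KE}(iv) produces the primed functions at the right arguments — is the only delicate point; once the map $\Phi$ and the relation $\Phi(m)'=(1-m)/(1+2m)$ are in hand, everything matches and no genuinely new computation is needed.
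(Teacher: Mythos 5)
Your proof is correct and is essentially the paper's own argument in different notation: the paper sets $\kappa_n:=\sqrt[3]{1-(b_n/a_n)^3}$ (your $m_n'$), writes $I_n=a_n^{-2}K_3(\kappa_n)$, $J_n=a_nE_3(\kappa_n)$, observes $\kappa_{n+1}=(1-\kappa_n')/(1+2\kappa_n')$ (your relation $\Phi(m_n)'=(1-m_n)/(1+2m_n)$), and then applies Proposition~\ref{prop:KE}(iv) with $k=\kappa_n$ exactly as you do with $k=m_n'$. The homogeneity reduction and the transposition at the end match the paper's multiplication by $a_n$ and use of $a_n+2b_n=3a_{n+1}$, so no substantive difference remains.
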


\begin{proof}
Set $\kappa_n:=\sqrt[3]{1-(b_n/a_n)^3}$.
We see at once that
\begin{equation}
\label{eq:in}
I_n=\frac{1}{a_n^2}K_3(\kappa_n),\quad
J_n=a_nE_3(\kappa_n),\quad n=0,1,2,\ldots.
\end{equation}

Now, letting $k=\kappa_n$ in Proposition \ref{prop:KE} (iv), we have
$$E_3(\kappa_n)=(1+2\kappa_n')E_3
\left(\frac{1-\kappa_n'}{1+2\kappa_n'}\right)
-\kappa_n'(1+\kappa_n')K_3(\kappa_n).$$
It is easily seen that $\kappa_n'=b_n/a_n$ and $\kappa_{n+1}=(1-\kappa_n')/(1+2\kappa_n')$.
Thus
$$E_3(\kappa_n)=\frac{a_n+2b_n}{a_n}E_3(\kappa_{n+1})
-\frac{b_n}{a_n}\left(1+\frac{b_n}{a_n}\right)K_3(\kappa_n).$$
Multiplying this by $a_n$ and using $a_n+2b_n=3a_{n+1}$ we obtain
$$a_nE_3(\kappa_n)=3a_{n+1}E_3(\kappa_{n+1})-b_n\left(1+\frac{b_n}{a_n}\right)K_3(\kappa_n).$$
From \eqref{eq:in} we accomplished the proof.
\end{proof}

\begin{prop}
\label{prop:EK}
Let $a \geq b>0$, then
$$J(a,b)=\left(a^3-a \sum_{n=1}^\infty 3^n (a_n+c_n)c_n \right) I(a,b),$$
where $c_n:=\sqrt[3]{a_n^3-b_n^3}$.
\end{prop}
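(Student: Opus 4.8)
The plan is to find a single auxiliary sequence whose $3^n$-weighted differences telescope directly into the target series, and then to identify its boundary limit. Writing $I:=I(a,b)$ and recalling from \eqref{eq:in} that $I_n=a_n^{-2}K_3(\kappa_n)$ and $J_n=a_nE_3(\kappa_n)$ with $\kappa_n:=\sqrt[3]{1-(b_n/a_n)^3}$, together with the constancy $a_nI_n=aI$ from \eqref{eq:constant}, I would introduce
$$T_n:=J_n-a_n^2\,aI=a_n\bigl(E_3(\kappa_n)-K_3(\kappa_n)\bigr).$$
The first expression is the convenient one for telescoping, while the second is the convenient one for estimating the boundary term at the end.

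The crucial algebraic input is the identity $c_{n+1}=\tfrac13(a_n-b_n)$: a direct expansion shows $a_{n+1}^3-b_{n+1}^3=\tfrac{1}{27}(a_n-b_n)^3$, whence $c_{n+1}=\sqrt[3]{a_{n+1}^3-b_{n+1}^3}=\tfrac13(a_n-b_n)$ and therefore $a_{n+1}+c_{n+1}=\tfrac13(2a_n+b_n)$. Combining Lemma \ref{lem:IJ} in the form $3J_{n+1}-J_n=aI\,b_n(a_n+b_n)$ (obtained using $I_n=aI/a_n$) with the elementary relation $3a_{n+1}^2-a_n^2=\tfrac13(-2a_n^2+4a_nb_n+4b_n^2)$, I expect
$$3T_{n+1}-T_n=aI\,\frac{(2a_n+b_n)(a_n-b_n)}{3}=3aI\,(a_{n+1}+c_{n+1})c_{n+1},$$
where the last equality is exactly the identity above. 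Multiplying by $3^n$ and summing over $n=0,\dots,N-1$ telescopes to
$$T_0=3^NT_N-aI\sum_{m=1}^{N}3^m(a_m+c_m)c_m,$$
and since $T_0=J(a,b)-a^3I$, this already has the shape of the claimed formula.

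It remains to show the boundary term vanishes, and this is the step I expect to be the main obstacle, since the factor $3^N$ forces control of the \emph{rate} at which $E_3(\kappa_N)/K_3(\kappa_N)\to1$. Here I would use the second form of $T_N$ and the elementary bound $0\le K_3(\kappa)-E_3(\kappa)\le\kappa^3K_3(\kappa)$, which follows by comparing integrands via $(1-x)^{-2/3}-(1-x)^{1/3}=x(1-x)^{-2/3}$ and $\sin_3^3\theta\le1$. This gives $|T_N|\le a_N\kappa_N^3K_3(\kappa_N)=c_N^3K_3(\kappa_N)/a_N^2$, which is $O(c_N^3)$ because $K_3(\kappa_N)\to\pi_3/2$ and $a_N\ge b>0$. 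Finally $c_N=\tfrac13(a_{N-1}-b_{N-1})$ together with the contraction \eqref{eq:cubic} yields $c_N\le3^{-N}(a-b)$, so $3^N|T_N|=O(3^{-2N})\to0$. Letting $N\to\infty$ then establishes simultaneously the convergence of the series and the equality $J(a,b)-a^3I=-aI\sum_{m=1}^\infty 3^m(a_m+c_m)c_m$, which rearranges to the assertion; it is precisely the clean cubic decay of $c_N$ supplied by $c_{n+1}=\tfrac13(a_n-b_n)$ that makes the boundary term die.
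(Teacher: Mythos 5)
Your proposal is correct and follows essentially the same route as the paper: the telescoping quantity $T_n=J_n-aa_n^2I$ is exactly the one the paper uses, the recursion $3T_{n+1}-T_n=3aI(a_{n+1}+c_{n+1})c_{n+1}$ via Lemma \ref{lem:IJ} and $c_{n+1}=\tfrac13(a_n-b_n)$ is the paper's computation, and your boundary estimate $|T_N|\le c_N^3K_3(\kappa_N)/a_N^2=O(27^{-N})$ is the same bound the paper obtains by rewriting $J_m-aa_m^2I$ as a single integral with numerator $-c_m^3\sin_3^3\theta$. The only cosmetic difference is that you phrase the tail estimate through $K_3-E_3\le\kappa^3K_3$ rather than through the combined integral, which does not change the argument.
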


\begin{proof}
We denote $I(a,b)$ and $J(a,b)$ briefly by $I$ and $J$ respectively.
Lemma \ref{lem:I} gives
$a_nI_n=aI$ for any $n$. By Lemma \ref{lem:IJ} 
and $c_{n+1}=(a_n-b_n)/3$, we obtain
\begin{align*}
3(J_{n+1}-aa_{n+1}^2I)-(J_n-aa_n^2I)
&=(ab_n(a_n+b_n)-3aa_{n+1}^2+aa_n^2)I\\
&=\frac{a}{3}(2a_n^2-a_nb_n-b_n^2)I\\
&=\frac{a}{3}(2a_n+b_n)(a_n-b_n)I\\
&=3a(a_{n+1}+c_{n+1})c_{n+1}I.
\end{align*}
Multiplying this by $3^n$ and summing both sizes 
from $n=0$ to $n=m-1$, we obtain
\begin{align}
\label{eq:J-aI}
3^{m}(J_{m}-a a_{m}^2I)-(J-a^3I)
&=a\left(\sum_{n=1}^{m} 3^{n} (a_{n}+c_{n})c_{n}\right) I.
\end{align}
On the other hand, since $aI=a_{m}I_{m}$, we have
\begin{align*}
3^{m}(J_{m}-aa_{m}^2I)
&=3^{m}\int_0^{\frac{\pi_3}{2}}
\frac{a_{m}^3\cos_3^3{\theta}+b_{m}^3\sin_3^3{\theta}-a_{m}^3}
{(a_{m}^3\cos_3^3{\theta}+b_{m}^3\sin_3^3{\theta})^{\frac23}}\, d\theta\\
&=3^{m}c_{m}^3\int_0^{\frac{\pi_3}{2}}
\frac{-\sin_3^3{\theta}}
{(a_{m}^3\cos_3^3{\theta}+b_{m}^3\sin_3^3{\theta})^{\frac23}}\, d\theta.
\end{align*}
By \eqref{eq:cubic} we get
$$0 \leq 3^{m}c_{m}^3 \leq \frac{1}{9^{m}}(a-b)^3,$$
which means 
$\lim_{m \to \infty}3^{m}(J_{m}-aa_{m}^2I)=0$. 
Therefore,  as $m \to \infty$ in \eqref{eq:J-aI}
the proposition follows. 
\end{proof}

Now we are in a position to show Theorem \ref{thm:main}.

\begin{proof}[Proof of Theorem \ref{thm:main}]
Let $k=1/\sqrt[3]{2}$ in Theorem \ref{thm:p-legendre},
then 
\begin{equation}
\label{eq:K}
2K_3\left(\frac{1}{\sqrt[3]{2}}\right)E_3\left(\frac{1}{\sqrt[3]{2}}\right)
-K_3\left(\frac{1}{\sqrt[3]{2}}\right)^2=\frac{\pi_3}{2}.
\end{equation}
Letting $a=1$ and $b=1/\sqrt[3]{2}$ in Proposition \ref{prop:EK} we get
$$E_3\left(\frac{1}{\sqrt[3]{2}}\right)
=\left(1-\sum_{n=1}^\infty 3^n (a_n+c_n)c_n \right) 
K_3\left(\frac{1}{\sqrt[3]{2}}\right),$$
where $c_n=\sqrt[3]{a_n^3-b_n^3}$.
Substituting this 
to \eqref{eq:K}, we have
$$\left(2\left(1-\sum_{n=1}^\infty 3^{n}(a_{n}+c_{n})
c_{n} \right)-1\right)
K_3\left(\frac{1}{\sqrt[3]{2}}\right)^2
=\frac{\pi_3}{2}.$$
Finally, applying Theorem \ref{thm:p-agm} with $k=1/\sqrt[3]{2}$
to this, we obtain
$$\left(1-2\sum_{n=1}^\infty 3^{n}(a_{n}+c_{n})c_{n} \right)
\frac{\pi_3^2}{4M_3\left(1,\dfrac{1}{\sqrt[3]{2}}\right)^2}
=\frac{\pi_3}{2}.$$
This leads the result.
\end{proof}

\begin{rem}
In Theorem \ref{thm:p-agm}, we proved the identity
$$K_3(k)=\frac{\pi_3}{2}\frac{1}{M_3(1,k')}.$$
From the fact and Corollary \ref{cor:ippanka} 
we can formally deduce 
$$K_3^*(k)=\frac{\pi_3}{2}\frac{1}{M_3(k',1)},$$
where $M_3(a,b)$ for $0<a \leq b$ is also defined by 
\eqref{eq:sequence} in the same way
as that for $a \geq b>0$. 
This means 
$$F\left(\frac13,\frac13;1;1-k^3\right)=\sqrt[3]{\frac{3}{1+k+k^2}}
F\left(\frac13,\frac13;1;\frac{(1-k)^3}{9(1+k+k^2)}\right),$$
which is reported by \cite{WCJ}.
\end{rem}








\begin{thebibliography}{99}

\bibitem{AQVV} G.D.\,Anderson, S.L.\,Qiu, M.K.\,Vamanamurthy and M.\,Vuorinen,
Generalized elliptic integrals and modular equations,
\textit{Pacific J.\,Math.} \textbf{192} (2000), no. 1, 1--37. 

\bibitem{AAR}
G.\,Andrews, R.\,Askey and R.\,Roy,
\textit{Special functions},
Encyclopedia of Mathematics and its Applications, 71. 
Cambridge University Press, Cambridge, 1999.


\bibitem{Be} B.C.\,Berndt,
\textit{Ramanujan's notebooks. Part V},
Springer-Verlag, New York, 1998.

\bibitem{BBG} B.C.\,Berndt, S.\,Bhargava and F.G.\,Garvan,
Ramanujan's theories of elliptic functions to alternative bases,
\textit{Trans.\,Amer.\,Math.\,Soc.} \textbf{347} (1995), no.\,11, 4163--4244.
 

\bibitem{BBCDG} P.\,Binding, L.\,Boulton, J.\,\v{C}epi\v{c}ka, P.\,Dr\'{a}bek and P.\,Girg,
Basis properties of eigenfunctions of the p-Laplacian, 
\textit{Proc.\,Amer.\,Math.\,Soc.}\,\textbf{134} (2006), no.\,12, 3487--3494.


\bibitem{BB2} J.M.\,Borwein and P.B.\,Borwein,
A cubic counterpart of Jacobi's identity and the AGM,
\textit{Trans.\,Amer.\,Math.\,Soc.} \textbf{323} (1991), no.\,2, 691--701. 

\bibitem{BB3} J.M.\,Borwein and P.B.\,Borwein,
\textit{Pi and the AGM}, 
A study in analytic number theory and computational complexity. 
Reprint of the 1987 original. Canadian Mathematical Society Series 
of Monographs and Advanced Texts, 4. A Wiley-Interscience Publication. 
John Wiley \& Sons, Inc., New York, 1998

\bibitem{Br} R.P.\,Brent,
Fast multiple-precision evaluation of elementary functions,
\textit{J.\,Assoc.\,Comput.\,Mach.} \textbf{23} (1976), no.\,2, 242--251.

\bibitem{Bu} F.D.\,Burgoyne,
Generalized trigonometric functions,
\textit{Math.\,Comp.} \textbf{18} (1964), 314--316. 

\bibitem{BE} P.J.\,Bushell and D.E.\,Edmunds,
Remarks on generalized trigonometric functions,
\textit{Rocky Mountain J.\,Math.}\,\textbf{42} (2012), no.\,1, 25--57.

\bibitem{BL} L.\,Boulton and G.\,Lord,
Approximation properties of the $q$-sine bases, 
\textit{Proc.\,R.\,Soc.\,Lond.\,Ser.\,A Math.\,Phys.\,Eng.\,Sci.}
\textbf{467} (2011), no.\,2133, 2690--2711.  


\bibitem{Ch} H.H.\,Chan,
On Ramanujan's cubic transformation formula for ${}_2F_1(\frac13,\frac23;1;z)$,
\textit{Math.\,Proc.\,Cambridge Philos.\,Soc.} \textbf{124} (1998), no.\,2, 193--204. 


\bibitem{DEM} M.\,del Pino, M.\,Elgueta, R.\,Man\'{a}sevich,
A homotopic deformation along $p$ of a Leray-Schauder degree 
result and existence for $(\vert u'\vert 			
^{p-2}u')'+f(t,u)=0,\;u(0)=u(T)=0,\;p>1$, 
\textit{J. Differential Equations} \textbf{80} (1989), 1--13. 


\bibitem{DoR} O.\,Do\v{s}l\'{y} and P.\,\v{R}eh\'{a}k,
\textit{Half-linear differential equations}, 
North-Holland Mathematics Studies, 202. Elsevier Science B.V.,
Amsterdam, 2005. 


\bibitem{DM} P.\,Dr\'{a}bek and R.\,Man\'{a}sevich,
On the closed solution to some nonhomogeneous eigenvalue problems 
with $p$-Laplacian,
\textit{Differential Integral Equations}
\textbf{12} (1999), 773--788. 

\bibitem{EGL} D.E.\,Edmunds, P.\,Gurka and J.\,Lang,
Properties of generalized trigonometric functions, 
\textit{J.\,Approx.\,Theory} \textbf{164} (2012), no.\,1, 47--56. 

\bibitem{EGL2} D.E.\,Edmunds, P.\,Gurka and J.\,Lang,
Basis properties of generalized trigonometric functions,
\textit{J.\,Math.\,Anal.\,Appl.} \textbf{420} (2014), no.\,2, 1680--1692. 

\bibitem{E} A.\,Elbert,
A half-linear second order differential equation. 
\textit{Qualitative 
theory of differential equations, Vol. I, II} (Szeged, 1979),
pp. 153--180, Colloq. Math. Soc. Janos Bolyai, 30,
North-Holland, Amsterdam-New York, 1981. 

\bibitem{El} E.B.\,Elliott,
A formula including Legendre's $EK'+KE'-KK'=\frac12 \pi$,
\textit{Messenger Math.} \textbf{33} (1903/1904), 31--32.

\bibitem{EMOT} A.\,Erd\'{e}lyi, W.\,Magnus, F.\,Oberhettinger and F.G.\,Tricomi,
\textit{Higher transcendental functions. Vol. I},
Based on notes left by Harry Bateman. With a preface by Mina Rees. 
With a foreword by E. C. Watson. Reprint of the 1953 original. 
Robert E. Krieger Publishing Co., Inc., Melbourne, Fla., 1981.

\bibitem{EMOT2} A.\,Erd\'{e}lyi, W.\,Magnus, F.\,Oberhettinger and F.G.\,Tricomi,
\textit{Higher transcendental functions. Vol. II}, 
Based on notes left by Harry Bateman. Reprint of the 1953 original. Robert E. Krieger Publishing Co., Inc., Melbourne, Fla., 1981.






\bibitem{LE} J.\,Lang and D.E.\,Edmunds,
\textit{Eigenvalues, embeddings and generalised trigonometric functions}, 
Lecture Notes in Mathematics, 2016. Springer, Heidelberg, 2011. 

\bibitem{Li} P.\,Lindqvist,
Some remarkable sine and cosine functions,
\textit{Ricerche Mat.}\,\textbf{44} (1995), no.\,2, 269--290.

\bibitem{LP} P.\,Lindqvist and J.\,Peetre,
$p$-arclength of the $q$-circle,
\textit{The Mathematics Student} \textbf{72} (2003), 139--145.

\bibitem{LP2} P.\,Lindqvist and J.\,Peetre,
\textit{Comments on Erik Lundberg's 1879 thesis. Especially on the work of 
G\"{o}ran Dillner and his influence on Lundberg},
Memorie dell'Instituto Lombardo
(Classe di Scienze Matem.\,Nat.) 31, 2004.


\bibitem{N} Y.\,Naito,
Uniqueness of positive solutions of quasilinear differential 
equations,
\textit{Differential Integral Equations} \textbf{8} (1995), no. 7, 1813-1822. 



\bibitem{Sa} E.\,Salamin,
Computation of $\pi$ using arithmetic-geometric mean, 
\textit{Math.\,Comp.} \textbf{30} (1976), no.\,135, 565--570. 

\bibitem{Sh} D.\,Shelupsky,
A generalization of the trigonometric functions, 
\textit{Amer.\,Math.\,Monthly} \textbf{66} (1959), 879--884. 



\bibitem{T} S.\,Takeuchi,
Generalized Jacobian elliptic functions and their application to bifurcation problems 
associated with $p$-Laplacian, 
\textit{J.\,Math.\,Anal.\,Appl.}\,\textbf{385} (2012), no.\,1, 24--35. 

\bibitem{T2} S.\,Takeuchi,
The basis property of generalized Jacobian elliptic functions,
\textit{Commun.\,Pure Appl.\,Anal.} \textbf{13} (2014), no.\,6, 2675--2692.


\bibitem{WCJ} M.-K.\,Wang, Y.-M.\,Chu and Y.-P.\,Jiang,
Ramanujan's cubic transformation and generalized modular equation,
arXiv preprint, arXiv:1305.6525.

\bibitem{Wa} K.\,Watanabe,
Planar $p$-elastic curves and related generalized complete elliptic integrals,
\textit{Kodai Math.\,J.} \textbf{37} (2014),  no.\,2, 453--474.

\bibitem{WW} E.T.\,Whittaker and G.N.\,Watson,
\textit{A course of modern analysis}, 
An introduction to the general theory of infinite processes 
and of analytic functions; with an account of the principal
transcendental functions. Reprint of the fourth (1927) edition. 
Cambridge Mathematical Library. Cambridge University Press, 
Cambridge, 1996.

\end{thebibliography}
\end{document}